\pdfoutput=1
\documentclass[final,fleqn,reqno,11pt]{article}
\usepackage[T1]{fontenc}
\usepackage{etex}
\usepackage[a4paper]{geometry}
\usepackage{lmodern}
\usepackage{mathrsfs}
\usepackage{microtype}
\usepackage{paragraphs}
\usepackage{enumitem}
\usepackage{amsmath}
\usepackage{amsfonts}
\usepackage{amssymb}
\usepackage{mathtools}

\usepackage{tikz}
\usetikzlibrary{decorations.markings}
\usetikzlibrary{decorations.pathmorphing}
\usetikzlibrary{intersections}
\usetikzlibrary{arrows.meta}
\usetikzlibrary{cd}

% slightly larger leading, for regularity
\usepackage{leading}
\leading{14.5pt}

% no linebreaks inside equations: we are overfull ninjas 
\overfullrule5pt
\relpenalty=10000
\binoppenalty=10000

% let \[ and \] be the same as \begin{equation} and \end{equation}
\makeatletter
\AtBeginDocument{%
  \let\[\@undefined
  \DeclareRobustCommand{\[}{\begin{equation}}%
  \let\]\@undefined
  \DeclareRobustCommand{\]}{\end{equation}}%
}
\makeatother
% but only print equation numbers if needed, thanks to mathtools
\mathtoolsset{showonlyrefs,showmanualtags}

% we want hyperlinks but no ugly boxes
\usepackage{hyperref}
\hypersetup{
  colorlinks, 
  linkcolor=-red!75!green!50, 
  citecolor=-red!75!green!50,
  urlcolor=green!30!black
}

% amsrefs has to come after hyperref for back references in the
% bibliography to work. 
\usepackage[initials,nobysame,msc-links]{amsrefs}

% theorems and such
\usepackage{amsthm}
\usepackage{thmtools}
\declaretheoremstyle[
        headformat=\NUMBER.\hskip.5ex\NAME\NOTE,
        spaceabove=\paraskip, 
        headfont=\bfseries,
        bodyfont=\itshape
        ]{mystyle}
\declaretheoremstyle[
        numbered=no, 
        spaceabove=\paraskip, 
        bodyfont=\itshape
        ]{mystyleempty}
\declaretheorem[style=mystyleempty, name=Lemma]{Lemma*}
\newcounter{theorem}

\declaretheorem[sibling=theorem, name=Theorem]{Theorem}
\declaretheorem[style=mystyleempty, name=Corollary]{Corollary*}

% redefine amsthm's proof environment to use zero \topsep
\makeatletter
\renewenvironment{proof}[1][\proofname]{\par
  \pushQED{\qed}%
  \normalfont \topsep0pt\relax
  \trivlist
  \item[\hskip\labelsep
        \itshape
    #1\@addpunct{.}]\ignorespaces
}{%
  \popQED\endtrivlist\@endpefalse
}
\makeatother

% lists inside theorems
\newlist{thmlist}{enumerate}{1}
\setlist[thmlist]{
        nolistsep,
        ref={\mdseries\textup{(\emph{\roman*})}},
        label={\mdseries\textup{(\emph{\roman*})}},
        %before={\advance\mathindent\leftmargin}
        }

% textual claims in equations
\newcommand\claim[2][.8]{%
  \begin{minipage}{#1\displaywidth}%
  \itshape
  #2
  \end{minipage}%
}

% this should be last
\iffalse
  \usepackage{showlabels}
  
\fi

% change the font of operator names
% from http://tex.stackexchange.com/questions/43131/the-font-of-operator-names
% and http://tex.stackexchange.com/questions/60891/the-font-of-operator-names-the-size
\DeclareFontFamily{OT1}{slmss}{}
\DeclareFontShape{OT1}{slmss}{m}{n}
     {<-8.5> s*[1.1] rm-lmss8
      <8.5-9.5> s*[1.1] rm-lmss9
      <9.5-11> s*[1.1] rm-lmss10
      <11-15.5> s*[1.1] rm-lmss12
      <15.5-> s*[1.1] rm-lmss17
     }{}
\DeclareSymbolFont{sfoperators}{OT1}{slmss}{m}{n}
\DeclareSymbolFontAlphabet{\mathsf}{sfoperators}
\makeatletter
\def\operator@font{\mathgroup\symsfoperators}
\makeatother

%%%%%%%%%%%%%%%%%%%%%%%%%%%%%%%%%%%%%%%%%%%%%%%%%%%%%%%%%%%%%%%%%%%%%%%%%%%%
\newcommand\NN{\mathbb{N}}
\newcommand\CC{\mathbb{C}}
\renewcommand\epsilon{\varepsilon}
\newcommand\id{\mathrm{id}}

\DeclarePairedDelimiter\abs{\lvert}{\rvert}
\DeclarePairedDelimiter\lin{\langle}{\rangle}
\DeclareMathOperator{\GL}{GL}
\DeclareMathOperator{\Ext}{Ext}
\DeclareMathOperator{\Tor}{Tor}
\let\nbd\nobreakdash
\newcommand\op{\mathsf{op}}
\newcommand\HH{H\!H}
\newcommand\kk{\Bbbk}
\DeclareMathOperator\Br{Br}
\newcommand\place{\mathord-}
\newcommand\p[2][]{_{#1(#2)}}

\DeclareMathOperator\rad{rad}
\newcommand\jump{\zeta}
\newcommand\ad{\mathsf{ad}}
\renewcommand\dbar[1]{\Bar{\Bar{#1}}}

\setitemize{nolistsep, listparindent=\parindent}

% definitions
\definecolor{newterm-color}{RGB}{0, 51, 153}
\newcommand\newterm[1]{%
  \textcolor{newterm-color}{\bfseries\itshape #1}%
}

\hyphenation{
  indeed Indeed actually implies linear order basis
  hypo-thesis sub-complex complex image Appendix
  homo-logical
}

%%%%%%%%%%%%%%%%%%%%%%%%%%%%%%%%%%%%%%%%%%%%%%%%%%%%%%%%%%%%%%%%%%%%%%%%%%%%
\title{A little bit of extra functoriality for $\Ext$ and the computation
of the Gerstenhaber bracket}

\author{Mariano Su\'arez-\'Alvarez%
  \thanks{This work has been partially supported by the projects UBACYT
  20020130100533BA, PIP-CONICET
  112-201101-00617, PICT 2011-1510 and MATHAMSUD-REPHOMOL. The author
  is a research member
  of CONICET (Argentina).}}

\date{April 18, 2016}

\AtEndDocument{\vspace*{\stretch{1}}{\footnotesize\parindent0pt%
  \textsc{Departamento de Matem\'atica, Facultad de Ciencias Exactas y
  Naturales, Universidad de Buenos Aires. Ciudad Universitaria,
  Pabell\'on I (1428) Ciudad de Buenos Aires - Argentina} \par  
  \textit{E-mail address}: \texttt{mariano@dm.uba.ar} \par
}}

%%%%%%%%%%%%%%%%%%%%%%%%%%%%%%%%%%%%%%%%%%%%%%%%%%%%%%%%%%%%%%%%%%%%%%%%%%%%
\begin{document}

\maketitle

\begin{abstract}
We show that the action of the Lie algebra $\HH^1(A)$ of outer derivations
of an associative algebra~$A$ on the Hochschild cohomology $\HH^\bullet(A)$
of~$A$ given by the Gerstenhaber bracket can be computed in terms of an
arbitrary projective resolution of~$A$ as an $A$-bimodule, without having
recourse to comparison maps between the resolution and the bar resolution.
\end{abstract}

\vspace*{1cm}

In his classic paper \emph{On the cohomology structure of an associative
ring} \cite{G}, Murray Gerstenhaber introduced a Lie algebra structure on
the Hochschild cohomology $\HH^\bullet(A)$ of an associative algebra~$A$.
This structure played a role in the proof contained in that paper of the
commutativity of the cup product of $\HH^\bullet(A)$, he himself showed
later in~\cite{G:Deformations} that it is related to the deformation theory
of~$A$, and it has ever since been regarded as an important piece of the
cohomological structure of the algebra. There has been a significant amount
of effort expended by many authors in order to study this structure,
specially in recent times.

This Lie algebra structure on~$\HH^\bullet(A)$ is defined in terms of a
particular realization of Hochschild cohomology: the algebra~$A$ has a
canonical bimodule \emph{bar} resolution~$B(A)_\bullet$, the Hochschild
cohomology~$\HH^\bullet(A)$ is canonically isomorphic to the cohomology of
the complex $\hom_{A^e}(B(A)_\bullet,A)$, and the Lie bracket
of~$\HH^\bullet(A)$ is constructed using certain explicit formulas in terms
of cochains in this complex. While this is convenient for many purposes, it
is quite inconvenient in one important respect: we never \emph{compute}
Hochschild cohomology using the bar resolution. In practice, we pick a
projective resolution~$P_\bullet$ of~$A$ which is better adapted to the
task and compute instead the cohomology of the complex
$\hom_{A^e}(P_\bullet,A)$, which is ---thanks to the yoga of homological
algebra--- canonically isomorphic to that of the complex
$\hom_{A^e}(B(A)_\bullet,A)$. In principle, we can transport the Lie
structure on the complex $\hom_{A^e}(B(A)_\bullet,A)$ to
$\hom_{A^e}(P_\bullet,A)$ using those canonical isomorphisms, but actually
doing this depends crucially on having explicit comparison morphisms
$B(A)_\bullet\rightleftarrows P_\bullet$ between the two resolutions
involved. The problem resides in that making such morphisms explicit is
notoriously difficult.

In what follows, we present an approach which allows us to compute in this
situation \emph{part} of the Gerstenhaber Lie bracket on the cohomology of
the complex $\hom_{A^e}(P_\bullet,A)$ \emph{without} having recourse to
comparison morphisms. More precisely, it gives a way to compute the
restriction of the Lie bracket to $\HH^1(A)\times\HH^\bullet(A)$ or, in
other words, the Lie action of the Lie algebra $\HH^1(A)$ of outer
derivations of~$A$ on $\HH^\bullet(A)$. This requires some amount of
lifting of maps to resolutions, as it should be expected, but involving
only the resolution~$P_\bullet$. Carrying this out in concrete examples
seems to be quite feasible.

\bigskip

Let us explain the idea and, at the same time, describe the contents of the
paper. Suppose that $\delta:A\to A$ is a derivation of the algebra~$A$ and
that $M$ is a left $A$-module. We say that a linear map $f:M\to M$ is a
\newterm{$\delta$-operator} on~$M$ if $f(am) = af(m)+\delta(a)m$ for all
$a\in A$ and $m\in M $. From such an~$f$ we construct in
Section~\ref{sect:bit} a linear map
  \[
  \nabla_f:\Ext_A^\bullet(M,M)\to\Ext_A^\bullet(M,M)
  \]
as follows: we pick a projective resolution $P_\bullet$ of~$M$, show that
there exists a morphism of complexes of \emph{vector spaces}
$f_\bullet:P_\bullet\to P_\bullet$ lifting $f:M\to M$ such that each
component $f_i:P_i\to P_i$ is a $\delta$-operator, and then define a
morphism of complexes
$f_\bullet^\sharp:\hom_A(P_\bullet,M)\to\hom_A(P_\bullet,M)$ such that
$f_i^\sharp(\phi)(p)=f(\phi(p))-\phi(f_i(p))$ for each
$\phi\in\hom_A(P_i,M)$ and each $p\in P_i$. The map $\nabla_f$ is the one
induced on homology by~$f^\sharp_\bullet$, and the key point here is that
it depends only on~$\delta$ and~$f$ and not on the choices made. We view
this as exhibiting a little bit of `extra' functoriality on the $\Ext$
functors, now with respect to $\delta$-operators, and find it somewhat
surprising.

Next, in section~\ref{sect:hh} we specialize this to the following
situation. We start with a derivation $\delta:A\to A$, we consider the
derivation $\delta^e=\delta\otimes1+1\otimes\delta:A^e\to A^e$ on the
enveloping algebra~$A^e$ of~$A$, and observe that the map $\delta:A\to A$
is then a $\delta^e$-operator on~$A$ viewed as a left $A^e$-module as
usual. Recalling that the Hochschild cohomology $\HH^\bullet(A)$ can often
be identified with $\Ext_{A^e}^\bullet(A,A)$, our construction then
produces a map
  \[
  \nabla_\delta:\HH^\bullet(A)\to\HH^\bullet(A)
  \]
which can be computed as described above from any $A^e$-projective
resolution of~$A$ endowed with a lifting of~$\delta$. In particular, we can
use the bar resolution to do this: on it there exists a certain canonical
lifting of~$\delta$ and it turns out that the explicit formulas associated
to it for the map $\nabla_\delta$ are precisely the same ones used by
Gerstenhaber to define the map 
  \[
  [\delta,\place]:\HH^\bullet(A)\to\HH^\bullet(A).
  \]
Of course, this means that $\nabla_\delta=[\delta,\place]$ and shows that
we can compute the restriction of the bracket to
$\HH^1(A)\times\HH^\bullet(A)$ using our favorite resolution, which is what
we wanted.

In Section~\ref{sect:examples} we present this computation of the
Gerstenhaber bracket in two ``real life'' examples: truncated path algebras
and crossed products of symmetric algebras $S(V)$ by a finite group~$G$
acting linearly. In the two cases ---and after a certain amount of work
needed to be able to describe explicitly the cohomology itself--- we are
able to exhibit formulas for the bracket. Finally, in the last section,
Section~\ref{sect:tor}, we rapidly explain how a procedure similar to the
one sketched above applies to $\Tor$ functors and, in particular, to the
action of the Lie algebra $\HH^1(A)$ on the Hochschild homology
$\HH_\bullet(A)$.

\bigskip

The very natural problem which we partially solve in this paper, that of
finding a way to compute the Gerstenhaber bracket on Hochschild cohomology
in term of an arbitrary projective resolution, was posed originally by
Gerstenhaber and Samuel Schack in their survey \cite{GS} in 1988. It is
generally agreed that solving it will
require a different perspective on the construction of the
bracket. Ten years later, Stefan Schwede gave in~\cite{Schwede} a beautiful
interpretation of the bracket in terms of actual commutators of paths in
the geometric realization of the nerve of a category of Yoneda extensions
first considered by Vladimir Retakh in~\cite{Retakh} --- it does not
appear, though, that this interpretation leads to a computational device in
practice. The first concrete step forward occurred very recently: in their
preprint \cite{NS}, based on the thesis~\cite{Negron} of the first author,
Cris Negron and Sarah Witherspoon describe an alternate approach to the
computation of the bracket which, under certain conditions ---satisfied,
for example, if the algebra is Koszul--- allows for the computation of the
bracket in terms of a projective resolution. This approach gives a
computation of the `whole' bracket, but has the disadvantage of being very
close in practice to the construction of comparison morphisms, which we
want to avoid.

\bigskip

In what follows we fix a commutative ring~$\kk$ to play the role of ring of
scalars. Throughout $A$ will denote a projective $\kk$-algebra, unadorned
$\otimes$ and $\hom$ will denote $\hom_\kk$ and $\otimes_\kk$, and linear
will mean $\kk$-linear. In particular, the Hochschild
cohomology~$\HH^\bullet(A)$ of~$A$ as a $\kk$-algebra can and will be
identified canonically with the Yoneda algebra $\Ext_{A^e}^\bullet(A,A)$
of~$A$ viewed as a left $A^e$-module, and likewise for homology.

%%%%%%%%%%%%%%%%%%%%%%%%%%%%%%%%%%%%%%%%%%%%%%%%%%%%%%%%%%%%%%%%%%%%%%%%%%%%
\section{A little bit of extra functoriality for
\texorpdfstring{$\Ext$}{Ext}}
\label{sect:bit}

\paragraph Let us fix an algebra $A$ and a derivation $\delta:A\to A$.
If $M$ is a left $A$-module,
a \newterm{$\delta$-operator} on~$M$ is a linear map $f:M\to M$ such that
for all $a\in A$ and all $m\in M$ we have
  \[
  f(am) = \delta(a)m + af(m).
  \]
While $\delta$-operators are in general not morphisms of $A$-modules, we
have the following:

\begin{Lemma*}
If $M$ is a left $A$-module and $f$,~$f':M\to M$ are $\delta$-operators
on~$M$, then $f-f':M\to M$ is a morphism of $A$-modules.
\end{Lemma*}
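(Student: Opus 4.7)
The plan is to verify the defining identity for an $A$-module map directly from the two $\delta$-operator identities, observing that the non-linear correction term $\delta(a)m$ is the same for $f$ and $f'$ and therefore cancels in the difference.

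More precisely, I would first note that $f - f'$ is $\kk$-linear, since it is the difference of two $\kk$-linear maps. Then, for any $a \in A$ and $m \in M$, I would apply the $\delta$-operator identity to each of $f$ and $f'$ to get
\[
f(am) - f'(am) = \bigl(\delta(a)m + af(m)\bigr) - \bigl(\delta(a)m + af'(m)\bigr) = a\bigl(f(m) - f'(m)\bigr),
\]
so that $(f-f')(am) = a(f-f')(m)$, which is exactly the condition for $f-f'$ to be $A$-linear.

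There is really no obstacle here: the whole content of the lemma is the observation that the $\delta$-operator condition is an affine version of $A$-linearity, with the inhomogeneous term depending only on $\delta$ and not on the operator itself, so the difference of any two $\delta$-operators lands in the associated linear space, namely $\hom_A(M,M)$.
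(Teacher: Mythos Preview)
Your proof is correct and is exactly the direct verification the paper has in mind; the paper's own proof consists of the single line ``This follows at once from the definition.''
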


\begin{proof}
This follows at once from the definition.
\end{proof}

\paragraph If $M$ is a left $A$-module, $f:M\to M$ a $\delta$-operator and
$\epsilon:P_\bullet\to M$ a projective resolution of~$M$, 
  \[
  \begin{tikzcd}
  \cdots \arrow[r]
    & P_2 \arrow[r, "d_2"]
    & P_1 \arrow[r, "d_1"]
    & P_0 \arrow[r, "\epsilon"]
    & M \arrow[r]
    & 0
  \end{tikzcd}
  \]
a \newterm{$\delta$-lifting} of~$f$ to $P_\bullet$  is a sequence
$f_\bullet=(f_i)_{i\geq0}$ of $\delta$-operators $f_i:P_i\to P_i$ such that
the diagram
  \[
  \begin{tikzcd}
  \cdots \arrow[r]
    & P_2 \arrow[r, "d_2"] \arrow[d, "f_2"]
    & P_1 \arrow[r, "d_1"] \arrow[d, "f_1"]
    & P_0 \arrow[r, "\epsilon"] \arrow[d, "f_0"]
    & M \arrow[r] \arrow[d, "f"]
    & 0
    \\
  \cdots \arrow[r]
    & P_2 \arrow[r, "d_2"]
    & P_1 \arrow[r, "d_1"]
    & P_0 \arrow[r, "\epsilon"]
    & M \arrow[r]
    & 0
  \end{tikzcd}
  \]
is commutative.

\paragraph As one can hope, $\delta$-liftings exist and are unique up to
reasonable equivalence. The key point to establishing this is the following
result:

\begin{Lemma*}\plabel{lemma:lift-1}
If $\epsilon:P\to M$ is a surjective morphism of left $A$-modules with
projective domain and $f:M\to M$ is a $\delta$-operator, then there exists
a $\delta$-operator $\tilde f:P\to P$ such that the diagram
  \[
  \begin{tikzcd}
  P \arrow[r, two heads, "\epsilon"] \arrow[d, "\tilde f"]
    & M \arrow[d, "f"]
    \\
  P \arrow[r, two heads, "\epsilon"]
    & M
  \end{tikzcd}
  \]
is commutative, $\tilde f(\ker\epsilon)\subseteq\ker\epsilon$ and the restriction
$f|_{\ker\epsilon}:\ker\epsilon\to\ker\epsilon$ is a $\delta$\nbd-operator.
\end{Lemma*}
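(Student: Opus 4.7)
The plan is to build $\tilde f$ by hand, reducing first to the case in which $P$ is free; once $\tilde f$ has been produced, the two subsidiary assertions about $\ker\epsilon$ will follow almost immediately. Indeed, any $\delta$-operator is in particular $\kk$-linear and so sends $0$ to $0$, so once $\tilde f$ satisfies $\epsilon\tilde f=f\epsilon$, for every $x\in\ker\epsilon$ one has $\epsilon(\tilde f(x))=f(0)=0$, whence $\tilde f(\ker\epsilon)\subseteq\ker\epsilon$; and the defining identity of a $\delta$-operator holds by hypothesis on all of $P$, so in particular on the submodule $\ker\epsilon$, making the restriction again a $\delta$-operator. The content of the lemma is therefore entirely the existence of a $\delta$-lift.

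To build $\tilde f$, I would first handle the case in which $P$ is free on an $A$-basis $\{e_i\}_{i\in I}$. Surjectivity of $\epsilon$ allows one to pick, for each $i$, an element $q_i\in P$ with $\epsilon(q_i)=f(\epsilon(e_i))$, and one is then essentially forced into the formula
\[
\tilde f\Bigl(\sum_i a_i e_i\Bigr)=\sum_i\delta(a_i)\,e_i+\sum_i a_i\,q_i.
\]
That this defines a $\delta$-operator reduces, after expansion, to the Leibniz rule for $\delta$, and the identity $\epsilon\tilde f=f\epsilon$ is then checked term by term using the $\delta$-operator equation for $f$. To pass to a general projective $P$, I would write it as a retract of a free module $F$, with $A$-linear maps $\iota:P\to F$ and $\pi:F\to P$ satisfying $\pi\iota=\id_P$. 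The composite $\epsilon\pi:F\to M$ is a surjection onto $M$ with free domain, so the free case produces a $\delta$-operator $g:F\to F$ with $\epsilon\pi g=f\epsilon\pi$; setting $\tilde f=\pi g\iota$ and using that $\pi$ and $\iota$ are $A$-linear shows that $\tilde f$ is a $\delta$-operator, while $\epsilon\tilde f=\epsilon\pi g\iota=f\epsilon\pi\iota=f\epsilon$.

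The step I would flag is not hard in itself, but it is the whole substance of the argument: one cannot invoke the universal property of projective modules directly, because the map $f\epsilon:P\to M$ to be lifted is itself a $\delta$-operator rather than an $A$-module morphism, and projectivity only lifts $A$-linear maps. The explicit formula in the free case is precisely what is needed to inject the ``$\delta$-twist'' into the lift; everything else is bookkeeping.
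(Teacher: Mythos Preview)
Your proof is correct. The paper takes a closely related but organizationally different path: rather than splitting into the free case and then passing to a retract, it works directly with a projective basis $(p_i,\phi_i)_{i\in I}$ for $P$ --- elements $p_i\in P$ and $A$-linear functionals $\phi_i\in\hom_A(P,A)$ with $p=\sum_i\phi_i(p)p_i$ for every $p$ --- and writes down in one stroke the formula
\[
\tilde f(p)=\sum_{i\in I}\bigl(\phi_i(p)\,q_i+\delta(\phi_i(p))\,p_i\bigr),
\]
where the $q_i\in P$ are chosen with $\epsilon(q_i)=f(\epsilon(p_i))$. This is exactly your free-module formula with the coordinates $a_i$ replaced by the values $\phi_i(p)$ of the dual-basis functionals, so the retract step is absorbed into the formula itself. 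Your two-step organization makes it more explicit where projectivity is used, while the paper's version avoids introducing the auxiliary free module~$F$; neither approach buys anything substantive that the other does not.
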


\begin{proof}
Let $(p_i,\phi_i)_{i\in I}$ be a projective basis for~$P$, so that $p_i\in
P$ and $\phi_i\in\hom_A(P,A)$ for all $i\in I$, and for each $p\in P$ the
set $\{i\in I:\phi_i(p)\neq0\}$ is finite and $p=\sum_{i\in
I}\phi_i(p)p_i$, and let $(q_i)_{i\in I}$ be a family of elements of~$P$
such that $\epsilon(q_i)=f(\epsilon(p_i))$ for all $i\in I$. The function $\tilde
f:P\to P$ such
  \[
  \tilde f(p) = \sum_{i\in I}\bigl(\phi_i(p)q_i + \delta(\phi_i(p))p_i\bigr)
  \]
for all~$p\in P$ is easily seen to satisfy the conditions of the lemma.
\end{proof}

\paragraph We can now deduce in the usual way the existence and uniqueness
of $\delta$-liftings:

\begin{Lemma*} 
Let $M$ be a left $A$-module and let $\epsilon:P_\bullet\to M$ be a
projective resolution.
\begin{thmlist}

\item There exists a $\delta$-lifting $f_\bullet:P_\bullet\to P_\bullet$
of~$f$ to~$P_\bullet$.

\item If $f_\bullet$,~$f'_\bullet$ are $\delta$-liftings of
a~$\delta$-operator $f:M\to M$ to~$P_\bullet$, then $f_\bullet$ and
$f'_\bullet$ are $A$-linearly homotopic.

\end{thmlist}
\end{Lemma*}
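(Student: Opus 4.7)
The plan is to imitate the classical comparison theorem for projective resolutions, substituting the preceding Lemma~\plabel{lemma:lift-1} for the usual lifting property of projectives and using the earlier observation that a difference of two $\delta$-operators is $A$-linear.

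For part~\thmitem{1}, I would build $f_\bullet$ by induction on degree. At the base, I apply Lemma~\plabel{lemma:lift-1} to the surjection $\epsilon:P_0\to M$ and the $\delta$-operator $f:M\to M$, obtaining a $\delta$-operator $f_0:P_0\to P_0$ with $\epsilon\circ f_0 = f\circ \epsilon$, preserving $\ker\epsilon$, and such that the restriction $f_0|_{\ker\epsilon}$ is itself a $\delta$-operator on $\ker\epsilon$. For the inductive step, assuming $f_0,\dots,f_{n}$ have been constructed so that each $f_i$ is a $\delta$-operator, the diagram commutes up to level $n$, and the induced map on $\ker d_n$ (equivalently, on $\mathrm{im}\,d_{n+1}$) is a $\delta$-operator, I apply Lemma~\plabel{lemma:lift-1} once more to the surjection $d_{n+1}:P_{n+1}\twoheadrightarrow \mathrm{im}\,d_{n+1}$ to produce the next $\delta$-operator $f_{n+1}:P_{n+1}\to P_{n+1}$, again inheriting the same properties on its kernel, which feeds the next step.

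For part~\thmitem{2}, let $g_\bullet = f_\bullet - f'_\bullet$. By the very first lemma of the section, each $g_i$ is $A$-linear, because it is the difference of two $\delta$-operators on the same module. Moreover $g_\bullet$ is a morphism of complexes of $A$-modules lifting the zero map $M\to M$. The standard comparison theorem for projective resolutions then produces an $A$-linear null-homotopy of $g_\bullet$, i.e., a family of $A$-linear maps $h_i:P_i\to P_{i+1}$ with $g_i = d_{i+1}h_i + h_{i-1}d_i$, which is precisely an $A$-linear homotopy between $f_\bullet$ and $f'_\bullet$.

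The only step that is not entirely routine is keeping track, during the induction in~\thmitem{1}, of the fact that the map induced by $f_n$ on $\ker d_n$ really is a $\delta$-operator, so that Lemma~\plabel{lemma:lift-1} can be applied at the next stage; but this is guaranteed by the last clause of that lemma, so no genuine obstacle arises. Everything else is the familiar pattern of lifting maps between projective resolutions, performed in the category of $\kk$-linear maps rather than $A$-linear ones.
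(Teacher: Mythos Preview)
Your proposal is correct and follows essentially the same approach as the paper: part~\thmitem{1} by induction via the preceding lemma (which supplies both the lifted $\delta$-operator and the $\delta$-operator on the kernel needed for the next step), and part~\thmitem{2} by observing that $f_\bullet-f'_\bullet$ is an $A$-linear lift of the zero map and invoking the standard comparison theorem. You have simply spelled out in more detail what the paper compresses into two sentences.
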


\begin{proof}
The first part follows inductively using the result of
Lemma~\pref{lemma:lift-1} at each step. On the other hand, in the situation
of the second part the diagram
  \[
  \begin{tikzcd}
  \cdots \arrow[r]
    & P_2 \arrow[r, "d_2"] \arrow[d, "f_2-f_2'"]
    & P_1 \arrow[r, "d_1"] \arrow[d, "f_1-f_1'"]
    & P_0 \arrow[r, "\epsilon"] \arrow[d, "f_0-f_0'"]
    & M \arrow[r] \arrow[d, "0"]
    & 0
    \\
  \cdots \arrow[r]
    & P_2 \arrow[r, "d_2"]
    & P_1 \arrow[r, "d_1"]
    & P_0 \arrow[r, "\epsilon"]
    & M \arrow[r]
    & 0
  \end{tikzcd}
  \]
is commutative and the vertical arrows are morphisms of left $A$-modules,
so the morphism of complexes $f_\bullet-f'_\bullet:P_\bullet\to P_\bullet$
is homotopic to the zero morphism, through an $A$-linear homotopy.
\end{proof}

\paragraph Let now $M$ be a left $A$-module, $f:M\to M$ a
$\delta$-operator, $\epsilon:P_\bullet\to M$ a projective resolution and
$f_\bullet:P_\bullet\to P_\bullet$ a $\delta$-lifting of~$f$
to~$P_\bullet$. If $i\geq0$ and $\phi\in\hom_A(P_i,M)$, then the map
$f_i^\sharp(\phi):P_i\to M$ given by
  \[
  f_i^\sharp(\phi)(p) = f(\phi(p)) - \phi(f_i(p))
  \]
for all $p\in P_i$ is a morphism of $A$-modules, so we have a function 
  \[ 
  f_i^\sharp:\hom_A(P_i,M)\to\hom_A(P_i,M)
  \]
which is linear. A computation shows that, in fact, we obtain in this way a
morphism of complexes of vector spaces
  \[ \label{eq:fsharp}
  f_\bullet^\sharp:\hom_A(P_\bullet,M)\to\hom_A(P_\bullet,M). 
  \]

\paragraph To study the dependence of the morphism~$f_\bullet^\sharp$ on
the data used in its construction we will need the following observation.

\begin{Lemma*}\plabel{lemma:comm}
Let $M$ be a left $A$-module, $f:M\to M$ a~$\delta$-operator on~$M$,
$\epsilon:P_\bullet\to M$ and $\epsilon':P_\bullet'\to M$ projective
resolutions and $f_\bullet:P_\bullet\to P_\bullet$ and
$f'_\bullet:P'_\bullet\to P'_\bullet$ $\delta$-liftings of~$f$
to~$P_\bullet$ and to~$P'_\bullet$, respectively. If
$\alpha_\bullet:P'_\bullet\to P_\bullet$ is a morphism of complexes of
$A$-modules lifting $\id_M:M\to M$, then the diagram
  \[
  \begin{tikzcd}
  \hom_A(P_\bullet,M) \arrow[d, swap, "\alpha_\bullet^*"] \arrow[r, "f_\bullet^\sharp"]
    & \hom_A(P_\bullet,M) \arrow[d, "\alpha_\bullet^*"] 
    \\
  \hom_A(P'_\bullet,M) \arrow[r, "f_\bullet'^\sharp"]
    & \hom_A(P'_\bullet,M)
  \end{tikzcd}
  \]
commutes up to homotopy.
\end{Lemma*}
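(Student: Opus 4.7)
The plan is to reduce the claim to the fact that an $A$-linear morphism of complexes lifting the zero map on~$M$ is $A$-linearly null-homotopic, a statement that comes out of the standard comparison theorem of homological algebra.

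First I would compute the two compositions pointwise. Given $\phi\in\hom_A(P_i,M)$ and $p'\in P_i'$, one has
\[
(\alpha_i^*\circ f_i^\sharp)(\phi)(p') = f(\phi(\alpha_i(p'))) - \phi(f_i(\alpha_i(p')))
\]
and
\[
(f_i'^\sharp\circ\alpha_i^*)(\phi)(p') = f(\phi(\alpha_i(p'))) - \phi(\alpha_i(f_i'(p'))),
\]
so the difference of the two is $\phi\bigl((\alpha_i\circ f_i' - f_i\circ\alpha_i)(p')\bigr)$. Hence, writing $\gamma_\bullet=\alpha_\bullet\circ f'_\bullet - f_\bullet\circ\alpha_\bullet:P_\bullet'\to P_\bullet$, what we have to show is that the induced morphism $\gamma_\bullet^*:\hom_A(P_\bullet,M)\to\hom_A(P_\bullet',M)$ is null-homotopic.

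The key observation, which I expect to be the main point of the argument, is that although $f_\bullet$ and $f'_\bullet$ are only $\delta$-operators (and not $A$-linear), the combination $\gamma_\bullet$ is $A$-linear: if $a\in A$ and $p'\in P_i'$ then, using that $\alpha_i$ is $A$-linear and that both $f_i$ and $f'_i$ are $\delta$-operators, the contributions of~$\delta(a)$ cancel:
\[
\alpha_i(f_i'(ap')) - f_i(\alpha_i(ap'))
  = \delta(a)\alpha_i(p') + a\alpha_i(f_i'(p'))
  - \delta(a)\alpha_i(p') - af_i(\alpha_i(p'))
  = a\gamma_i(p').
\]
A direct check also shows that $\gamma_\bullet$ is a morphism of complexes. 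Moreover, since $\alpha_\bullet$ lifts $\id_M$, both $f\circ\epsilon\circ\alpha_0$ and $\epsilon\circ\alpha_0\circ f'_0$ equal $f\circ\epsilon'$, so $\gamma_\bullet$ lifts the zero map $M\to M$.

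Once we know that $\gamma_\bullet:P_\bullet'\to P_\bullet$ is a morphism of complexes of $A$-modules lifting the zero morphism, the standard comparison theorem for projective resolutions yields an $A$-linear chain homotopy $s_\bullet$ with $\gamma_i = d_{i+1}s_i + s_{i-1}d_i'$. Applying $\hom_A(\place,M)$ transforms $s_\bullet$ into a chain homotopy between $\gamma_\bullet^*$ and the zero map, and therefore between $\alpha_\bullet^*\circ f_\bullet^\sharp$ and $f_\bullet'^\sharp\circ\alpha_\bullet^*$, which is what we wanted.
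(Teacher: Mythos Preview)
Your argument is correct and follows essentially the same route as the paper's: you form $\gamma_\bullet=\alpha_\bullet f'_\bullet-f_\bullet\alpha_\bullet$ (the paper calls it~$h_\bullet$), observe that the $\delta$-terms cancel so that it is an $A$-linear chain map lifting~$0$, and then invoke the comparison theorem. The only difference is that you spell out the pointwise computations and the verification of $A$-linearity that the paper leaves as ``a computation shows''.
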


\begin{proof}
The difference $h_\bullet=\alpha_\bullet
f'_\bullet-f_\bullet\alpha_\bullet:P'_\bullet\to P_\bullet$ is, in
principle, only a morphism of complexes of vector spaces, but a computation
shows that its components are in fact $A$-linear. As
$h_\bullet:P'_\bullet\to P_\bullet$ is then a lifting of the zero map
$0:M\to M$, it is $A$-linearly homotopic to zero and, therefore, the
induced map
  \[
  h_\bullet^*:\hom_A(P_\bullet,M)\to\hom_A(P'_\bullet,M)
  \]
is also homotopic to zero. Since $\alpha_\bullet^*\circ
f_\bullet^\sharp-f_\bullet'^\sharp\circ\alpha_\bullet^*=h_\bullet^*$, the
lemma follows from this.
\end{proof}

\paragraph As a first consequence of this lemma, we see that if $M$ is a
left $A$-module, $\delta:M\to M$ a $\delta$-operator on~$M$,
$\epsilon:P_\bullet\to M$ a projective resolution and
$f_\bullet:P_\bullet\to P_\bullet$ and $f'_\bullet:P_\bullet\to P_\bullet$
$\delta$\nbd-liftings of~$f$ to~$P_\bullet$, then the maps of complexes
$f_\bullet^\sharp$,~$f_\bullet'^\sharp:\hom_A(P_\bullet,M)\to\hom_A(P_\bullet,M)$
are homotopic ---this is the special case of the lemma in which
$P'_\bullet=P_\bullet$, $\epsilon'=\epsilon$ and
$\alpha_\bullet=\id_{P_\bullet}$--- and therefore they induce the same map
on cohomology. We may therefore denote this induced map, which depends only
on~$f$ and not on the choice of the $\delta$-lifting used to compute it,
simply by
  \[
  \nabla^\bullet_{f,P_\bullet}:H(\hom_A(P_\bullet,M))\to H(\hom_A(P_\bullet,M)).
  \]
Next, if $\epsilon':P'_\bullet\to M$ is another projective resolution,
$f'_\bullet:P'_\bullet\to P'_\bullet$ a $\delta$-lifting of~$f$
to~$P'_\bullet$ and $\alpha_\bullet:P'_\bullet\to P_\bullet$ a lifting
of~$\id_M:M\to M$, the diagram 
  \[
  \begin{tikzcd}
  H(\hom_A(P_\bullet,M)) \arrow[d, swap, "H(\alpha_\bullet^*)"] 
        \arrow[r, "\nabla^\bullet_{f,P_\bullet}"]
    & H(\hom_A(P_\bullet,M)) \arrow[d, "H(\alpha_\bullet^*)"] 
    \\
  H(\hom_A(P'_\bullet,M)) 
        \arrow[r, "\nabla^\bullet_{f,P'_\bullet}"]
    & H(\hom_A(P'_\bullet,M))
  \end{tikzcd}
  \]
induced on cohomology by the one in the statement of the lemma
\emph{commutes}. Recalling the way the Yoneda functor $\Ext_A^\bullet(M,M)$
is identified with a derived functor, we see that the end result of all we
have done is the following:

\begin{Theorem}\label{thm:nabla}
If $M$ is a left $A$-module and $f:M\to M$ is a $\delta$-operator on~$M$,
there is a canonical morphism of graded vector spaces
  \[
  \nabla_f^\bullet:\Ext_A^\bullet(M,M)\to\Ext_A^\bullet(M,M)
  \]
such that for each projective resolution $\epsilon:P_\bullet\to M$ and each
$\delta$-lifting $f_\bullet:P_\bullet\to P_\bullet$ of~$f$ to~$P_\bullet$
the diagram 
  \[
  \begin{tikzcd}[column sep=1.5cm]
  H(\hom_A(P_\bullet,M)) \arrow[d, swap, "\cong"] 
        \arrow[r, "\nabla^\bullet_{f,P_\bullet}"]
    & H(\hom_A(P_\bullet,M)) \arrow[d, "\cong"] 
    \\
  \Ext_A^\bullet(M,M) 
        \arrow[r, "\nabla^\bullet_f"]
    & \Ext_A^\bullet(M,M) 
  \end{tikzcd}
  \]
in which the vertical arrows are the canonical isomorphisms, commutes.~\qed
\end{Theorem}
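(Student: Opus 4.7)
The plan is to upgrade the collection of maps $\nabla^\bullet_{f,P_\bullet}$ indexed by projective resolutions into a single map on $\Ext^\bullet_A(M,M)$, using Lemma~\pref{lemma:comm} to check consistency under change of resolution. The preceding paragraphs have already done half of the work: for a fixed resolution $\epsilon:P_\bullet\to M$, the map $\nabla^\bullet_{f,P_\bullet}$ on $H(\hom_A(P_\bullet,M))$ is independent of the choice of $\delta$-lifting $f_\bullet$. So the only remaining issue is independence from the resolution itself.

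First I would fix one projective resolution $\epsilon:P_\bullet\to M$ and one $\delta$-lifting $f_\bullet$, and \emph{define} $\nabla_f^\bullet:\Ext_A^\bullet(M,M)\to\Ext_A^\bullet(M,M)$ as the map obtained by transporting $\nabla^\bullet_{f,P_\bullet}$ along the canonical isomorphism $H(\hom_A(P_\bullet,M))\cong\Ext_A^\bullet(M,M)$. With this definition the commutative square in the statement holds tautologically for the chosen $P_\bullet$, and by the independence from the $\delta$-lifting it holds for every $\delta$-lifting on $P_\bullet$.

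Next I would verify that the same square commutes for every other projective resolution $\epsilon':P'_\bullet\to M$ equipped with a $\delta$-lifting $f'_\bullet$. Standard homological algebra furnishes a morphism of complexes of $A$-modules $\alpha_\bullet:P'_\bullet\to P_\bullet$ lifting $\id_M$, and recalls that the canonical identification between the two computations of $\Ext_A^\bullet(M,M)$ is precisely the map $H(\alpha_\bullet^*)$; moreover $H(\alpha_\bullet^*)$ depends only on $\epsilon$ and $\epsilon'$, not on the chosen $\alpha_\bullet$, because any two such liftings are $A$-linearly homotopic. Lemma~\pref{lemma:comm} now asserts that $\alpha_\bullet^*\circ f_\bullet^\sharp$ and $f_\bullet'^\sharp\circ\alpha_\bullet^*$ are homotopic, hence they induce equal maps on cohomology, i.e.\
\[
H(\alpha_\bullet^*)\circ\nabla^\bullet_{f,P_\bullet}
=\nabla^\bullet_{f,P'_\bullet}\circ H(\alpha_\bullet^*).
\]
Since $H(\alpha_\bullet^*)$ is an isomorphism, this equality shows that the map on $\Ext_A^\bullet(M,M)$ obtained by transporting $\nabla^\bullet_{f,P'_\bullet}$ coincides with the one obtained from $\nabla^\bullet_{f,P_\bullet}$, and hence equals $\nabla_f^\bullet$ as defined above. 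This is exactly the commutativity of the diagram in the theorem for the resolution $P'_\bullet$.

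There is no genuine obstacle here: everything is a direct assembly of the two preparatory observations (invariance under change of $\delta$-lifting for a fixed resolution, and Lemma~\pref{lemma:comm} for change of resolution), together with the standard fact that the canonical isomorphism $H(\hom_A(P_\bullet,M))\cong\Ext_A^\bullet(M,M)$ is functorial in $P_\bullet$ in the homotopy category. The main point to be careful about is simply to spell out that the word \emph{canonical} in the statement refers to a map defined once on $\Ext_A^\bullet(M,M)$ and compatible with \emph{every} choice of resolution and $\delta$-lifting, which is exactly what the previous two items together guarantee.
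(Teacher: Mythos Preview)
Your proposal is correct and follows essentially the same approach as the paper: the paragraph immediately preceding the theorem already derives independence from the $\delta$-lifting (the special case $P'_\bullet=P_\bullet$, $\alpha_\bullet=\id$ of Lemma~\pref{lemma:comm}) and then uses the full Lemma~\pref{lemma:comm} to show compatibility under change of resolution, which is exactly your argument. The theorem carries a \qed\ symbol in its statement precisely because the proof has been completed in that preceding discussion.
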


\paragraph In keeping with a long standing tradition, the very first
example we present is a somewhat trivial one, reserving for the next
section the one in which we are really interested.

\begin{Lemma*}
Suppose that $\delta:A\to A$ is an inner derivation, so that there exists
an $r\in A$ with $\delta(a)=[r,a]$ for all $a\in A$. If $M$ is a left
$A$-module and $f:M\to M$ is a $\delta$\nbd-operator on~$M$, then there
exists a morphism of left $A$-modules $\bar f:M\to M$ such that $f(m)=\bar
f(m)+rm$ for all $m\in M$, and the map
$\nabla^\bullet_f:\Ext_A^\bullet(M,M)\to\Ext_A^\bullet(M,M)$ is such that
for all $\phi\in\Ext_A^\bullet(M,M)$ we have $\nabla^i_f(\phi) = \bar
f_*(\phi)-\bar f^*(\phi) = [\bar f,\phi]$. 
\end{Lemma*}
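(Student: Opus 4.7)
The plan is to carry the decomposition $f = \bar f + \mu_r$, where $\mu_r$ denotes left multiplication by~$r$, all the way up through a projective resolution and then read off the two pieces of the commutator from the definition of $f_\bullet^\sharp$.

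First I would produce $\bar f$. Defining $\bar f(m) := f(m) - rm$ and using both that $f$ is a $\delta$-operator and that $\delta = [r,\place]$, a short calculation gives
\[
\bar f(am) = f(am) - ram = \delta(a)m + af(m) - ram = (ra - ar)m + af(m) - ram = a\bar f(m),
\]
so $\bar f$ is indeed $A$-linear. Note that this computation also shows that $\mu_r:M\to M$ is itself a $\delta$-operator, and by the very first lemma of Section~\ref{sect:bit} the two $\delta$-operators $f$ and $\mu_r$ differ by an $A$-linear map, which is exactly~$\bar f$.

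Next I would build a particularly convenient $\delta$-lifting. Fix a projective resolution $\epsilon:P_\bullet\to M$ and, using the classical comparison theorem, lift $\bar f:M\to M$ to an $A$-linear chain map $\bar f_\bullet:P_\bullet\to P_\bullet$. Each $P_i$ is a left $A$-module, so by the same observation as above the left multiplication $\mu_r:P_i\to P_i$ is a $\delta$-operator on~$P_i$; moreover $\mu_r$ commutes with the $A$-linear differentials of~$P_\bullet$. Setting $f_i := \bar f_i + \mu_r$ we obtain $\delta$-operators (sum of an $A$-linear map and a $\delta$-operator) whose components assemble into a chain map lifting $f = \bar f + \mu_r:M\to M$. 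Hence $f_\bullet$ is a $\delta$-lifting of~$f$ in the sense defined earlier, and by Theorem~\ref{thm:nabla} we are free to compute $\nabla_f^\bullet$ from it.

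Finally I would compute $f_i^\sharp$ on a cochain $\phi\in\hom_A(P_i,M)$. For $p\in P_i$,
\[
f_i^\sharp(\phi)(p) = f(\phi(p)) - \phi(f_i(p)) = \bar f(\phi(p)) + r\phi(p) - \phi(\bar f_i(p)) - \phi(rp),
\]
and since $\phi$ is $A$-linear the terms $r\phi(p)$ and $\phi(rp)$ cancel, leaving
\[
f_i^\sharp(\phi) = \bar f\circ\phi - \phi\circ\bar f_i.
\]
Post-composition with $\bar f$ is the chain-level representative of the functorial map $\bar f_*$ on $\Ext_A^\bullet(M,M)$, while pre-composition with $\bar f_\bullet$ represents $\bar f^*$; taking cohomology therefore gives $\nabla_f^\bullet(\phi) = \bar f_*(\phi) - \bar f^*(\phi) = [\bar f,\phi]$, the last equality being the usual sign-free graded commutator in the Yoneda algebra $\Ext_A^\bullet(M,M)$ since $\bar f$ sits in degree zero.

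The only step that is not wholly routine is checking that $f_\bullet = \bar f_\bullet + \mu_r$ really is a $\delta$-lifting, but this reduces to the two observations that $\mu_r$ is a $\delta$-operator on any $A$-module and that it commutes with $A$-linear maps; everything else is bookkeeping with the definition of $f_\bullet^\sharp$.
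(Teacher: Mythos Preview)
Your argument is correct and follows exactly the same route as the paper: define $\bar f=f-\mu_r$, lift $\bar f$ to an $A$-linear chain map $\bar f_\bullet$, add left multiplication by~$r$ to obtain a $\delta$-lifting $f_\bullet=\bar f_\bullet+\mu_r$, and then observe that the $\mu_r$-terms in $f_i^\sharp(\phi)$ cancel by $A$-linearity of~$\phi$. The paper's proof is just a terser version of what you wrote, with $\mu_r$ called~$\lambda$.
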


\smallskip

Here $\bar f^*$, $\bar f_*:\Ext_A^\bullet(M,M)\to\Ext_A^\bullet(M,M)$ are
the maps induced by~$\bar f$ on the first and on the second argument
of~$\Ext$, respectively, and the commutator in the last formula is the one
obtained on $\Ext_A^\bullet(M,M)$ from the Yoneda product.

\smallskip

\begin{proof}
Let $\lambda:M\to M$ be the map given by multiplication by~$r$. A
computation shows that $\bar f=f-\lambda:M\to M$ is a morphism of left
$A$-modules. Let now $\epsilon:P_\bullet\to M$ be a projective resolution,
let $\bar f_\bullet:P_\bullet\to P_\bullet$ be a lifting of $\bar f$
to~$P_\bullet$ and let $\lambda_\bullet:P_\bullet\to P_\bullet$ be the map
given by multiplication by~$r$. Then $f_\bullet=\bar
f_\bullet+\lambda_\bullet:P_\bullet\to P_\bullet$ is a $\delta$-lifting
of~$f$ to~$P_\bullet$ and if $i\geq0$ and $\phi\in\hom_A(P_i,M)$, we have
$f_i^\sharp(\phi) = \bar f_*(\phi)-\bar f_i^*(\phi)$. This proves the
lemma.
\end{proof}

%%%%%%%%%%%%%%%%%%%%%%%%%%%%%%%%%%%%%%%%%%%%%%%%%%%%%%%%%%%%%%%%%%%%%%%%%%%%
\section{The Gerstenhaber bracket on Hochschild cohomology}
\label{sect:hh}

\paragraph\label{p:bar} As we did in the previous section, we fix an
algebra~$A$ and a derivation $\delta:A\to A$. If~$M$~is a left $A$-module,
there is a standard projective resolution $\epsilon:B(M)_\bullet\to A$,
called the \newterm{bar resolution}, with $B(M)_i=A^{\otimes(i+1)}\otimes
M$ for each $i\geq0$, differentials given by
  \begin{multline*}
  d_i(a_0\otimes\cdots\otimes a_i\otimes m)
    = \sum_{i=0}^{i-1}(-1)^i
        a_0\otimes\cdots\otimes 
        a_{i-1}\otimes a_ia_{i+1}\otimes a_{i+2}\otimes\cdots\otimes a_i\otimes m \\[-5pt]
        + (-1)^ia_0\otimes\cdots\otimes a_{i-1}\otimes a_im
  \end{multline*}
for all $i\geq1$ and augmentation map $\epsilon:B(M)_0\to M$ such that
$\epsilon(a_0\otimes m)=a_0m$. If~$f:M\to M$ is a $\delta$-operator on~$M$,
there is a canonical $\delta$-lifting $f_\bullet:B(M)_\bullet\to
B(M)_\bullet$ of~$f$ to~$B(M)_\bullet$ given by
  \begin{multline*}
  f_i(a_0\otimes\cdots\otimes a_i\otimes m)
    = \sum_{i=0}^{i-1}a_0\otimes\cdots\otimes 
                a_{i-1}\otimes\delta(a_i)\otimes a_{i+1}\otimes\cdots\otimes a_i\otimes m  \\[-5pt]
        + a_0\otimes\cdots\otimes a_i\otimes f(m),
  \end{multline*}
as a computation will show. From this we obtain an explicit description of
the morphism $f_\bullet^\sharp:\hom_A(B(M)_\bullet,M)\to\hom_A(B(M)_\bullet,M)$: 
if $i\geq0$ and $\phi:B(M)_i\to M$ is $A$-linear, then
  \begin{multline*}
  f_i^\sharp(\phi)(a_0\otimes\cdots\otimes a_i\otimes m)
    = 
      f(\phi(a_0\otimes\cdots\otimes a_i\otimes m)) \\
      \shoveright{- \sum_{i=0}^{i-1}\phi(a_0\otimes\cdots\otimes 
       a_{i-1}\otimes\delta(a_i)\otimes a_{i+1}\otimes\cdots\otimes a_i\otimes m)}   \\
      - \phi(a_0\otimes\cdots\otimes a_i\otimes f(m))
  \end{multline*}

\paragraph\label{p:env} Let now $A^e=A\otimes A^\op$ be the enveloping
algebra of~$A$ ---so that we may identify $A$-bimodules with left
$A^e$-modules--- and consider $A$ as a left $A^e$-module as usual. From the
derivation $\delta:A\to A$ we can construct a new derivation
$\delta^e=\delta\otimes\id_A+\id_A\otimes\delta:A^e\to A^e$, and it turns
out that the map $\delta:A\to A$ is then a $\delta^e$-operator on~$A$.
Recalling that in our context we may identify $\Ext_{A^e}^\bullet(A,A)$
with the Hochschild cohomology $\HH^\bullet(A)$, our general construction
from the previous section gives us a map
  \[ \label{eq:nabla-delta}
  \nabla_\delta:\HH^\bullet(A)\to\HH^\bullet(A).
  \]
We want to see what this map is.

If for each $i\geq0$ we turn the left $A$-module $B(A)_i$ into an
$A$-bimodule with right action given by 
  \[
  a_0\otimes\cdots\otimes a_i\otimes a\cdot b = 
        a_0\otimes\cdots\otimes a_i\otimes ab,
  \]
then the projective resolution $\epsilon:B(A)_\bullet\to A$ of $A$ as a
left $A$-module constructed in~\pref{p:bar} becomes a projective resolution
of~$A$ as an $A^e$-module. Moreover, the $\delta$-lifting
$\delta_\bullet:B(A)_\bullet\to B(A)_\bullet$ of the $\delta$-operator
$\delta:A\to A$ constructed there is a $\delta^e$-lifting, as one can
easily check, so that we may use it to compute the
map~\eqref{eq:nabla-delta} up to the canonical identification of
$\HH^\bullet(A)$ with $H(\hom_{A^e}(B(A)_\bullet,A))$.

Let now $C^\bullet(A)$ be the standard complex which computes Hochschild
cohomology, which has $C^i(A)=\hom(A^{\otimes i},A)$ for each $i\geq0$ and
differentials $d^i:C^i(A)\to C^{i+1}(A)$ given by
  \begin{multline*}
  d^i(\phi)(a_1\otimes\cdots\otimes a_{i+1}) 
     = a_1\phi(a_2\otimes\cdots\otimes a_{i+1}) \\
     \shoveright{  + \sum_{j=1}^i(-1)^{j+1}\phi(
                a_1a\otimes\cdots\otimes a_{j-1}
                \otimes a_ja_{j+1}\otimes a_{j+2}\otimes\cdots\otimes
                a_{i+1})} \\
       + (-1)^{i+1}\phi(a_1\otimes\cdots\otimes a_i)a_{i+1}.
  \end{multline*}
Of course, there is an isomorphism of complexes
$\tau_\bullet:\hom_{A^e}(B(A)_\bullet,A)\to C^\bullet(A)$ such that
$\tau_i(\phi)(a_1\otimes\cdots\otimes a_i\otimes1)=\phi(1\otimes
a_1\otimes\cdots\otimes a_i)$ for all $i\geq0$ and all
$\phi\in\hom_{A^e}(B(A)_i,A)$. If now we let $[\place,\place]$ be the
Gerstenhaber bracket on~$C^\bullet(A)$, as constructed in~\cite{G}, then
the diagram
  \[
  \begin{tikzcd}
  \hom_{A^e}(B(A)_\bullet,A)
        \arrow[r, "\tau_\bullet"] \arrow[d, swap, "\delta_\bullet^\sharp"]
    & C^\bullet(A)
        \arrow[d, "{[\delta,\place]}"]
    \\
  \hom_{A^e}(B(A)_\bullet,A)
        \arrow[r, "\tau_\bullet"]
    & C^\bullet(A)
  \end{tikzcd}
  \]
commutes. This means that the map $\nabla_\delta$ of~\eqref{eq:nabla-delta}
is in fact simply $[\delta,\place]$. The point of all this is that
Theorem~\pref{thm:nabla} from the previous section tells us that we can
compute $\nabla_\delta$ using \emph{any} projective resolution of~$A$ as an
$A$-bimodule, provided we are able to construct a $\delta^e$-lifting of
$\delta$.

\paragraph If in the situation of the previous paragraph the
derivation~$\delta$ with which we start is inner, so that there is an~$r\in
A$ with $\delta=[r,\place]$, then $\delta^e$ is also inner, as
$\delta^e=[r^e,\place]$ with $r^e=r\otimes1-1\otimes r$. Moreover, if
$\epsilon:P_\bullet\to A$ is any resolution of~$A$ as an $A$-bimodule, then
there is a $\delta^e$-lifting $\delta_\bullet:P_\bullet\to P_\bullet$ of
$\delta:A\to A$ to~$P_\bullet$ such that for all $i\geq0$ and all $p\in
P_i$ we have $\delta_i(p)=rp-pr$. The associated map
$\delta_\bullet^\sharp:\hom_{A^e}(P_\bullet,A)\to\hom_{A^e}(P_\bullet,A)$
is identically zero, so we have that
$\nabla_\delta:\HH^\bullet(A)\to\HH^\bullet(A)$ itself is zero, as it
should.

%%%%%%%%%%%%%%%%%%%%%%%%%%%%%%%%%%%%%%%%%%%%%%%%%%%%%%%%%%%%%%%%%%%%%%%%%%%%
\section{Examples}
\label{sect:examples}

\subsection*{Monomial algebras}

\paragraph Let $Q=(Q_0,Q_1,s,t)$ be a finite quiver and let $\kk Q$ be the
corresponding path algebra; if $v\in Q_0$ is a vertex, we write $e_v$ the
corresponding idempotent. Let $R$ be a set of paths in~$Q$ of length at
least~$2$ such that no element of~$R$ divides another and consider the
monomial algebra $A=\kk Q/(R)$. We write $E$ the subalgebra of~$A$ spanned
by the vertices; whenever $Z$ is a set of paths in~$Q$, the vector
space~$\kk Z$ which has~$Z$ as a basis has a natural structure
of~$E$-bimodule. 

We let $\epsilon:\Br_\bullet\to A$ be the projective resolution of $A$ as
an $A$-bimodule constructed by Michael Bardzell in~\cite{B}; a useful
companion to Bardzell's paper is the work \cite{Sk} of Emil Sk\"oldberg,
where a contracting homotopy on $\Br_\bullet$ is exhibited. There is a
sequence $(R_i)_{i\geq0}$ of sets of paths in~$Q$ such that $R_0=Q_0$ is
the set of vertices, $R_1=Q_1$ is the set of arrows, $R_2=R$ and
$\Br_i=A\otimes_E\kk R_i\otimes_EA$ for all $i\geq0$. The augmentation
$\epsilon:\Br_0=A\otimes_EA\to A$ is the map induced by the multiplication
of~$A$, and for each $i\geq1$ the differential $d:\Br_i\to\Br_{i-1}$ has
the property that whenever $u$ and $w$ are paths in~$Q$ which are not in
the ideal~$(R)$ and $v\in R_i$, and we can form the concatenation $uvw$, then
$d_i(u\otimes v\otimes w)$ is a $\kk$-linear combination of elementary
tensors $u'\otimes v'\otimes w'$ of $\Br_{i-1}=A\otimes_ER_{i-1}\otimes_EA$
with $u'$ and $w'$ paths in $Q$ not in~$(R)$ and a path~$v'$ in~$R_{i-1}$
such that the concatenation $u'v'w'$ exists and coincides with the path
$uvw$. 

The differentials $d:\Br_1\to\Br_0$ and $d:\Br_2\to\Br_1$, in particular,
are such that $d_1(1\otimes\alpha\otimes1)=\alpha\otimes1-1\otimes\alpha$
whenever $\alpha$ is an arrow, and 
  \[
  d_2(1\otimes r\otimes 1)
    =\sum_{i=1}^n\alpha_1\cdots\alpha_{i-1}\otimes\alpha_i\otimes \alpha_{i+1}\cdots\alpha_n
  \]
whenever $r=\alpha_1\cdots\alpha_n$ is an element of~$R$ of length~$n$. It
follows from this that we can identify $\hom_{A^e}(\Br_1,A)$ with the
vector space $\hom_{E^e}(\kk Q_1,A)$, whose elements are the linear
functions $\delta:\kk Q_1\to A$ which map each arrow~$\alpha$ to a linear
combination of paths in~$Q$ which are parallel to~$\alpha$. Such a map is
in the kernel of the differential
$d_2^*:\hom_{A^e}(\Br_1,A)\to\hom_{A^e}(\Br_2,A)$ iff for each element
$r=\alpha_1\cdots\alpha_n$ of~$R$ we have

  \[
  \sum_{i=1}^n\alpha_1\cdots\alpha_{i-1}\delta(\alpha_i)\alpha_{i+1}\cdots\alpha_n
    = 0
  \]
in the algebra~$A$, and indeed this condition is satisfied iff the
map~$\delta$ can be extended to a $E$-linear derivation $A\to A$.

\paragraph\label{p:diagonal} If $c:Q_1\to\kk$ is an arbitrary function
defined on the set of arrows of~$Q$, we may consider the extension
$c:Q_*\to\kk$ such that $c(w)=\sum_{i=1}^nc(\alpha_i)$ for each path
$w=\alpha_1\cdots\alpha_n$ in~$Q$, and then the $E^e$-linear map
$\delta_c:A\to A$ such that $\delta_c(w)=c(w)w$ for all paths in~$Q$. One
sees immediately that this is an $E^e$-linear derivation. We say that
derivations of the form~$\delta_c$ for some $c:Q_1\to\kk$ are
\newterm{diagonal}.

We consider the derivation
$\delta_c^e=\delta_c\otimes1+1\otimes\delta_c:A^e\to A^e$ on the enveloping
algebra~$A^e$, as in~\pref{p:env}, and view $\delta_c:A\to A$ as a
$\delta_c^e$-operator on the left $A^e$-module~$A$. There is a
$\delta_c^e$-lifting $(\delta_c)_\bullet:\Br_\bullet\to\Br_\bullet$
of~$\delta_ c$ to the resolution~$\Br_\bullet$ such that for each $i\geq0$,
each $u$,~$w\in Q_*$ and each $v\in R_i$ such that the concatenation $uvw$
exists, we have
  \[
  (\delta_c)_i(u\otimes v\otimes w) = c(uvw)\cdot u\otimes v\otimes w.
  \]
We may use this $\delta_c^e$-lifting to compute the Gerstenhaber bracket
$[\delta_c,\place]$ on~$\HH^\bullet(A)$, if we identify it with the
cohomology of the complex $\hom_{A^e}(\Br_\bullet,A)$. Indeed, the space
$\hom_{A^e}(\Br_i,A)$ of $i$-cochains in this complex can be identified
with $\hom_{E^e}(\kk R_i,A)$, and clearly has as a basis the set of all
$E^e$-linear maps $\phi_{r,u}:\kk R_i\to A$ with $r\in R_i$ and $u$ a
path in~$Q$ parallel to~$r$ and which is non-zero in~$A$, given by
$\phi_{r,u}(s)=0$ for all $s\in R_i\setminus\{r\}$ and $\phi_{r,u}(r)=u$,
so it is sufficient to compute $[\delta_c,\phi_{r,u}]$, and this is,
according to what we have done so far,
  \[ \label{eq:diag}
  [\delta_c,\phi_{r,u}]
    = \nabla_{\delta_c}(\phi_{r,u})
    = (\delta_c)_i^\sharp(\phi_{r,u})
    = \bigl(c(u)-c(r)\bigr)\phi_{r,u}.
  \]
If the set~$R$ satisfies the condition that
  \[
  \claim{whenever $\alpha:i\to j$ is an arrow  of~$Q$ we have $\dim
  e_iAe_j=1$, so that there are no non-zero paths in~$A$ parallel to an
  arrow apart from the arrow itself,} 
  \]
then it is easy to see that \emph{all} elements of $\HH^1(A)$ are
represented by diagonal derivations and in this situation Lucrecia Rom\'an
has obtained the formula~\eqref{eq:diag} in her thesis~\cite{R} after
fearlessly computing comparison morphisms $\Br_\bullet\rightleftarrows
B(A)_\bullet$ and then using the usual formula for the Gerstenhaber bracket
on the standard complex~$C^\bullet(A)$.

\paragraph Let us suppose now that we have an integer $N\geq2$ and that $R$
is the set~$Q_N$ of \emph{all} paths of length~$N$ in~$Q$; the algebra~$A$
is then what is usually called a \newterm{truncated algebra}. In this case
the Bardzell resolution~$\Br_\bullet$ admits a very simple description,
which we now recall.
Let $\jump:\NN_0\to\NN_0$ be the function such that $\jump(2k)=Nk$ and
$\jump(2k+1)=Nk+1$ for all $k\in\NN_0$. Then for all $i\geq0$ we have 
$R_i=Q_{\jump(i)}$, the set of all paths of length~$\jump(i)$ in the quiver~$Q$, and
the differential on~$\Br_i$ is such that for each $w\in Q_{\zeta(i)}$ we
have
  \begin{align}
  & d(1\otimes w\otimes 1) = 
    \sum_{\substack{aub=w\\u\in Q_{N(k-1)+1}}} a\otimes u\otimes b 
    && \text{if $i=2k$ is even}
\shortintertext{and}
  & d(1\otimes w\otimes 1) = 
    a\otimes r\otimes 1 - 1\otimes l\otimes b
    && \text{if $i=2k+1$ is odd,} 
  \end{align}
where in this last case $a$,~$b\in Q_1$ and $r$,~$l\in Q_{Nk}$ are such that $w=ar=lb$.

\begin{Lemma*}
An $E^e$-linear map $\delta:\kk Q_1\to A$ is a $1$-cocycle in the complex
$\hom_{A^e}(\Br_\bullet,A)$ if it takes values in~$\rad A$, and if the
quiver $Q$ has more than one vertex and is connected then this condition is
also necessary.
\end{Lemma*}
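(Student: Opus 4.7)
The plan is to use the cocycle condition derived above: after identifying $\hom_{A^e}(\Br_1,A)$ with $\hom_{E^e}(\kk Q_1,A)$ and reading off the even-index differential of $\Br_\bullet$, a map $\delta:\kk Q_1\to A$ is a $1$-cocycle precisely when
  \[
  \sum_{i=1}^N \alpha_1\cdots\alpha_{i-1}\,\delta(\alpha_i)\,\alpha_{i+1}\cdots\alpha_N = 0
  \]
in~$A$ for every path $\alpha_1\cdots\alpha_N\in Q_N$. Sufficiency is then immediate: if each $\delta(\alpha_i)$ lies in $\rad A$, it is a linear combination of paths of length $\geq 1$, so each of the $N$ summands above is a linear combination of paths of length $\geq N$, and all such paths vanish in the truncated algebra~$A$.

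For necessity, observe first that if an arrow $\alpha:u\to v$ has $u\neq v$, then $\delta(\alpha)$ already lies in $\rad A$ automatically, since every path parallel to~$\alpha$ runs between distinct vertices and so has length at least one. The only thing left to check is that for every loop $\alpha$ at a vertex~$v$, the coefficient $c\in\kk$ appearing in the $E^e$-linear decomposition $\delta(\alpha)=c\,e_v+\rho$ with $\rho\in e_v(\rad A)e_v$ must vanish.

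To force $c=0$ I plan to exhibit two relations $Nc=0$ and $(N-1)c=0$ in~$\kk$, which together give $c=Nc-(N-1)c=0$. The first comes from evaluating the cocycle condition on the path $\alpha^N$: each $\rho$-contribution $\alpha^{i-1}\rho\alpha^{N-i}$ has length $\geq N$ and so vanishes in~$A$, the sum collapses to $Nc\cdot\alpha^{N-1}=0$, and since $\alpha^{N-1}$ is a nonzero element of~$A$ (its length is $N-1<N$), we obtain $Nc=0$. For the second relation the hypothesis on~$Q$ enters: since~$Q$ is connected and has more than one vertex, there is an arrow~$\beta$ incident to~$v$ whose other endpoint is some $w\neq v$. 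In the case $\beta:w\to v$ I evaluate the cocycle condition on the path $\beta\alpha^{N-1}$; the term $\delta(\beta)\alpha^{N-1}$ vanishes because $\delta(\beta)$ is a combination of paths $w\to v$ all of length $\geq 1$, the $\rho$-contributions again vanish for length reasons, and what remains is $(N-1)c\cdot\beta\alpha^{N-2}=0$, whence $(N-1)c=0$ since $\beta\alpha^{N-2}$ is a nonzero path of length $N-1$ in~$A$. The case $\beta:v\to w$ is symmetric, using $\alpha^{N-1}\beta$ in place of $\beta\alpha^{N-1}$.

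The main subtlety to navigate is that $\kk$ is an arbitrary commutative ring, so one cannot simply divide by~$N$ after establishing $Nc=0$; it is exactly this obstruction that forces the use of a second path involving a non-loop arrow, and hence the hypothesis that~$Q$ is connected with more than one vertex. Everything else reduces to a careful bookkeeping of path lengths in the truncated algebra~$A$.
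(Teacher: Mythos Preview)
Your proof is correct and follows essentially the same approach as the paper's: both decompose $\delta(\alpha)$ for a loop $\alpha$ into its constant part $c\,e_v$ plus a radical part, obtain $Nc=0$ from the relation $\alpha^N$, and then use a non-loop arrow $\beta$ incident to $v$ (guaranteed by connectedness and the existence of a second vertex) to extract $(N-1)c=0$ from $\beta\alpha^{N-1}$ or $\alpha^{N-1}\beta$. The only organizational difference is that the paper splits the entire map as $\delta=\delta_0+\delta_+$ up front, while you handle each arrow individually; the substance is identical.
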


\begin{proof}
The sufficiency is clear, so we deal only with the second part.
Let $\delta:\kk Q_1\to A$ be a $1$-cocycle and let
$d_2^*:\hom_{A^e}(\Br_1,A)\to\hom_{A^e}(\Br_2,A)$ be the differential. There are
$E^e$-linear maps $\delta_0:\kk Q_1\to\kk E$ and $\delta_+:\kk Q_1\to\rad
A$ such that $\delta=\delta_0+\delta_+$. If $r=\alpha_1\cdots\alpha_N\in R$, 
then
$d_2^*(\delta_+)(r)=\sum_{i=1}^N\alpha_1\cdots\delta_+(\alpha_i)\cdots\alpha_N$
is a linear combination of paths of length at least~$N$, so that in fact
$d_2^*(\delta_+)=0$ and, in particular, we have $d_2^*(\delta_0)=0$. 
On the other hand, if we write $\Omega Q$ the set of loops in~$Q$,
there is a function $\lambda:\Omega Q\to\kk$ such that for all $\alpha\in
Q_1$ we have
  \[
  \delta_0(\alpha) =
    \begin{cases*}
    \lambda(\alpha)s(\alpha), & if $\alpha\in\Omega Q$; \\
    0, & otherwise.
    \end{cases*}
  \]
If $\alpha\in\Omega Q$, then $\alpha^N\in R$ and
$d_2^*(\delta_0)(\alpha^N)=N\lambda(\alpha)\alpha^{N-1}=0$, so that
$N\lambda(\alpha)=0$. As $Q$ has more than one vertex and is
connected, there is an arrow $\beta\in Q_1\setminus\Omega Q$ such that one
of $\alpha^{N-1}\beta$ or~$\beta\alpha^{N-1}$ is in~$R$, and then either
$d_2^*(\delta_0)(\alpha^{N-1}\beta)=(N-1)\lambda(\alpha)\alpha^{N-2}\beta=0$
or
$d_2^*(\delta_0)(\beta\alpha^{N-1})=(N-1)\lambda(\alpha)\beta\alpha^{N-2}=0$,
and therefore $(N-1)\lambda(\alpha)=0$. It follows that
$\lambda(\alpha)=0$ and we see that in fact $\delta_0=0$.
\end{proof}

\smallskip

We now fix an $E^e$-linear map $\delta:\kk Q_1\to\rad A$ and assume
moreover that $\delta$ is \newterm{homogeneous},
so that there is an $l\in\{1,\dots,N-1\}$ such that the image
of~$\delta$ is in $\kk Q_l$; the \newterm{degree} of~$\delta$ is then the
number $l-1$. We will write the $E^e$-linear derivation
$A\to A$ which extends the $1$-cocycle~$\delta$ by the same letter. If
$n$,~$m\geq0$, there is a unique $E^e$-linear map $\Delta_n^m:\kk Q\to
A\otimes_E\kk Q_m\otimes_E A$ such that for each path $w\in Q_*$ we have
$\Delta_n^m(w)=0$ if $\abs{w}<n+m$ and
  \[
  \Delta_n^m(w) = a\otimes u\otimes b,
  \]
with $aub$ the unique factorization of~$w$ with $\abs{a}=n$ and $\abs{u}=m$.
A verification shows that there is a $\delta^e$-lifting $\delta_\bullet:\Br_\bullet\to\Br_\bullet$ 
of~$\delta$ to the complex $\Br_\bullet$ such that for each $i\geq0$ and
each $w=\alpha_1\cdots\alpha_{\zeta(i)}\in Q_{\zeta(i)}$ we have
  \[
  \delta_i(1\otimes w\otimes 1) = 
    \begin{dcases*}
    \Delta_{l-1}^{\zeta(i)}(\delta(w)), 
                & if $i$ is even; \\
    \Delta_{l-1}^{\zeta(i)}(\delta(w))
        + \sum_{j=0}^{l-2}
                \Delta_j^{\zeta(i)}(\alpha_1\cdots\alpha_{\zeta(i)-1}\delta(\alpha_{\zeta(i)}), 
                & if $i$ is odd.
    \end{dcases*}
  \]
To exemplify how we can use this, we propose to describe the Lie action
of~$\HH^1(A)$ on the cohomology~$\HH^\bullet(A)$. To do this we need some
information on this cohomology, of course. In low degrees, it was computed
by Claude Cibils in~\cite{Cibils:truncated} and this was extended to all
degrees by Ana Locateli in~\cite{Locateli} when the ground field has
characteristic zero and by Yunge Xu, Yang Han and Wenfeng Jiang for the
general case in~\cite{XHJ}. What these authors do, though, is to find the
dimensions of the cohomology groups and for our purpose this is not
enough, as we need the actual cocycles. To keep things simple, we will
content ourselves with a special case and with obtaining information only on the even part
of cohomology.

\begin{Lemma*}
Suppose that $Q$ has no sources and no sinks and that it is not an oriented
cycle. If $k\geq1$, then the subspace of~$2k$-cocycles in the
complex~$\hom_{A^e}(\Br_\bullet,A)$ is $\hom_{E^e}(\kk Q_{Nk},\kk
Q_{N-1})$.
\end{Lemma*}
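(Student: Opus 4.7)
The inclusion $\hom_{E^e}(\kk Q_{Nk},\kk Q_{N-1}) \subseteq \ker d^*$ is immediate: if $\phi$ takes values in length-$(N-1)$ paths then for any $w = ar = \ell b$ in $Q_{Nk+1}$ both $a\,\phi(r)$ and $\phi(\ell)\,b$ are linear combinations of length-$N$ paths and hence vanish in $A = \kk Q/(Q_N)$.

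For the converse, let $\phi$ be a $2k$-cocycle, viewed as an $E^e$-linear map $\kk Q_{Nk}\to A$, and decompose it as $\phi = \sum_{j=0}^{N-1}\phi_j$ with $\phi_j:\kk Q_{Nk}\to\kk Q_j$ still $E^e$-linear. Since paths of length at most $N-1$ are linearly independent in $A$, the cocycle identity $a\,\phi(r) = \phi(\ell)\,b$ decouples by target length, yielding for each $j < N-1$ and each $w = ar = \ell b$ in $Q_{Nk+1}$ the relation
\[
a\,\phi_j(r) = \phi_j(\ell)\,b \qquad \text{in } \kk Q_{j+1}.
\]
We must show that $\phi_j = 0$ for every $j < N-1$.

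Fix such a $j$ and $r = \beta_1\cdots\beta_{Nk}$. The plan is to exploit the absence of sources and sinks so as to test this identity against arbitrary prepended and appended arrows, comparing the two sides path-by-path in $\kk Q_{j+1}$, where distinct paths are linearly independent. For any arrow $a$ into $s(r)$, each $p$ in the support of $\phi_j(r)$ must end with $\beta_{Nk}$; applying the same observation to $\phi_j(\ell)$, with $\ell = a\beta_1\cdots\beta_{Nk-1}$, forces $p$ inductively to end with the block $\beta_{Nk-j+1}\cdots\beta_{Nk}$. A symmetric argument using any arrow $b$ out of $t(r)$ forces $p$ to begin with $\beta_1\cdots\beta_j$. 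As $\abs{p} = j$, this pins $p = \beta_1\cdots\beta_j = \beta_{Nk-j+1}\cdots\beta_{Nk}$ and in particular imposes the self-overlap condition $\beta_i = \beta_{Nk-j+i}$ for $i=1,\dots,j$; otherwise $\phi_j(r)=0$. Hence $\phi_j(r) = c_r\,\beta_1\cdots\beta_j$ for some $c_r \in \kk$. (For $j=0$ this degenerates into: $\phi_0(r)$ vanishes unless $r$ is closed, in which case $\phi_0(r) = c_r\,s(r)$.)

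Now assume $c_r\neq 0$. Revisiting the cocycle identity: for any arrow $a$ into $s(r)$ with $a \neq \beta_{Nk-j}$, the path $\ell$ fails the self-overlap condition, so $\phi_j(\ell)=0$, and the identity forces the length-$(j+1)$ path $c_r\,a\beta_1\cdots\beta_j$ (nonzero in $\kk Q$) to vanish, a contradiction. Thus $\beta_{Nk-j}$ is the unique arrow into $s(r)$, and symmetrically $\beta_{j+1}$ is the unique arrow out of $t(r)$. Choosing $a=\beta_{Nk-j}$ yields $c_\ell = c_r \neq 0$, and iterating along the cyclic shifts of $r$—which all inherit the self-overlap condition by periodicity—propagates the uniqueness of incoming and outgoing arrows to every vertex of the closed cycle $\beta_1\beta_2\cdots\beta_d$ of length $d = Nk - j$. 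Consequently this cycle is a union of connected components of $Q$, contradicting the hypothesis that $Q$ is connected and not an oriented cycle. Hence $\phi_j = 0$ for every $j < N-1$, and $\phi \in \hom_{E^e}(\kk Q_{Nk},\kk Q_{N-1})$. The main obstacle is executing the propagation argument cleanly—bookkeeping the cyclic shifts of $r$ and verifying that they retain the self-overlap condition.
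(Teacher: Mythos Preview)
Your proof is correct and follows the same overall strategy as the paper: decompose a $2k$-cocycle as $\phi=\sum_j\phi_j$ by target length, and for each $j<N-1$ use the cocycle identity to pin down $\phi_j(r)$ as a scalar $c_r$ times a single path determined by the first (equivalently last) $j$ arrows of $r$, with the self-overlap constraint.

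Where you diverge is in the endgame. The paper introduces an equivalence relation $\approx$ on $Q_{Nk}$ (generated by one-step shifts), shows $c_r$ is constant on $\approx$-classes, and then proves there is a \emph{single} class via an argument about strongly connected components; finally, since $Q$ is not an oriented cycle, some vertex has two arrows leaving or entering it, which produces a path whose scalar must vanish by the self-overlap condition, so the constant is zero. You instead argue locally: if $c_r\neq0$, the cocycle identity together with the self-overlap forces the arrow into $s(r)$ (and out of $t(r)$) to be unique, and propagating this along shifts traps $Q$ on an oriented cycle that is a full connected component. Both routes implicitly use that $Q$ is connected (the paper needs it for its single-class claim just as you do), and both are valid. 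Your version avoids the strongly-connected-component analysis and is a bit more direct; the paper's version yields the slightly stronger intermediate fact that the scalar function $r\mapsto c_r$ is globally constant before it is shown to vanish.
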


\smallskip

Here the simplifying assumption is that there are no sources or sinks,
while the exclusion of the case of an oriented cycle is due to the fact
that this is really an exceptional case. Using this lemma we can easily
obtain the promised result:

\begin{Corollary*}
A homogeneous derivation of nonzero degree acts by zero on 
the even part $\HH^{\mathsf{even}}(A)$ of the Hochschild cohomology.
\end{Corollary*}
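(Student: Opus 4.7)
The plan is to apply Theorem~\pref{thm:nabla}, combined with the discussion in Section~\ref{sect:hh}, which says that the Lie action $[\delta,\place]$ on $\HH^\bullet(A)$ can be computed on any projective $A^e$-resolution of~$A$ endowed with a $\delta^e$-lifting of~$\delta$. Choosing the Bardzell resolution $\Br_\bullet$ together with the explicit $\delta^e$-lifting $\delta_\bullet$ described just above the preceding lemma, the task reduces to showing that the cochain-level map $\delta_{2k}^\sharp$ kills every $2k$-cocycle, for $k\geq 1$.

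The preceding lemma tells us that any such cocycle $\phi$ takes values in $\kk Q_{N-1}\subseteq A$, so $\phi(1\otimes w\otimes 1)\in \kk Q_{N-1}$ for every $w\in Q_{Nk}$. Evaluating the definition of $\delta_{2k}^\sharp$ on a generator $1\otimes w\otimes 1$ and using the even case of the lifting formula, we obtain
\[
\delta_{2k}^\sharp(\phi)(1\otimes w\otimes 1)
  = \delta\bigl(\phi(1\otimes w\otimes 1)\bigr)
    - \phi\bigl(\Delta_{l-1}^{Nk}(\delta(w))\bigr).
\]
The first summand lies in $\delta(\kk Q_{N-1})\subseteq \kk Q_{N+l-2}$, since $\delta$ is homogeneous of degree $l-1$. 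In the second, each summand of $\Delta_{l-1}^{Nk}(\delta(w))$ has the shape $a\otimes v\otimes b$ with $|a|=l-1$, $|v|=Nk$ and $|b|=0$, so by $A^e$-linearity
\[
\phi(a\otimes v\otimes b) = a\cdot \phi(1\otimes v\otimes 1)\cdot b
  \in \kk Q_{l-1}\cdot \kk Q_{N-1}\cdot \kk Q_0
  \subseteq \kk Q_{N+l-2}.
\]
The nonzero-degree hypothesis means $l\geq 2$, so $N+l-2\geq N$, and both summands vanish in $A$ by the defining relations $R=Q_N$. Hence $\delta_{2k}^\sharp(\phi)=0$ on the nose, and the corollary follows for every $k\geq 1$.

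There is no real obstacle in this argument: all the conceptual work has been done by the preceding lemma, which pins down the target of even cocycles to $\kk Q_{N-1}$, and by Theorem~\pref{thm:nabla}, which allows the computation to be carried out with the Bardzell resolution. The rest is just a careful accounting of path lengths, and the single arithmetic inequality $N+l-2\geq N$ ---which would fail precisely if $\delta$ had degree zero--- is what makes the nonzero-degree hypothesis sharp.
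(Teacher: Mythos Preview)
Your argument is correct and follows exactly the paper's own approach: the paper's proof reduces the statement, via the preceding lemma, to checking that $\delta_{2k}^\sharp(\phi)=0$ for all $k\geq1$ and all $\phi\in\hom_{E^e}(\kk Q_{Nk},\kk Q_{N-1})$, and then dismisses this as ``a trivial computation''. You carry out precisely that computation, with the length-count $N+l-2\geq N$ making explicit why both terms in $\delta_{2k}^\sharp(\phi)$ die in~$A$.
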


\begin{proof}
It is enough to check that if $l\in\{2,\dots,N-\}$ and $\delta:\kk Q_1\to\kk Q_l$
is a homogeneous cocycle of degree $l-1$ and $\delta_\bullet$ is the
lifting of~$\delta$ to the complex~$\Br_\bullet$ described above, then
$\delta_{2k}^\sharp(\phi)=0$ for all $k\geq1$ and all 
$\phi\in\hom_{E^e}(\kk Q_{Nk},\kk
Q_{N-1})$, since the lemma tells us that these are all the $2k$-cocycles.
This is a trivial computation.
\end{proof}

\smallskip

\begin{proof}[Proof of the lemma]
If $\phi:\kk Q_{Nk}\to A$ is a $2k$-cochain in that complex, there are $E^e$-linear
maps
$\phi_i:\kk Q_{Nk}\to\kk Q_l$ for each $i\in\{0,\dots,N-1\}$ such that
$\phi=\sum_{i=0}^{N-1}\phi_i$, and it is clear, since the algebra is
monomial, that $\phi$ is a cocycle if and only if all the $\phi_i$ are.
Now, an $E^e$-linear map $\kk Q_{Nk}\to\kk Q_{N-1}$ is automatically a
cocycle, so the lemma will be proved if we show that 
  \[
  \claim{if $0\leq l<N-1$ and $\phi:\kk Q_{Nk}\to\kk Q_l$ is an 
  $E^e$-linear map which is a $2k$-cocycle in the
  complex~$\hom_{A^e}(\Br_\bullet,A)$, then $\phi=0$.}
  \]
To do that, let us fix an integer~$l$ such that $0\leq l<N-1$ and an
$E^e$-linear map $\phi:\kk Q_{Nk}\to\kk Q_l$ which is a $2k$-cocycle with
values in~$\kk Q_l$. This means, precisely, that for each path
$\alpha_1\dots\alpha_{Nk+1}\in Q_{Nk+1}$ we have
  \[ \label{eq:t:1}
  \phi(\alpha_1\cdots\alpha_{Nk})\alpha_{Nk+1}
    = \alpha_1\phi(\alpha_2\cdots\alpha_{Nk+1}).
  \]
Let $r$ be an integer such that $0\leq 2r\leq l$ and suppose that 
  \[ \label{eq:t:2}
  \claim{for each $w=\alpha_1\cdots\alpha_{Nk}\in Q_{Nk}$ there exists a
  $\bar\phi(w)\in\kk Q_{l-2r}$ which is a linear combination of paths from
  $s(\alpha_{r+1})$ to $t(\alpha_{Nk-r})$
  such that
  $\phi(w)=\alpha_1\cdots\alpha_r\bar\phi(w)\alpha_{Nk-r+1}\cdots\alpha_{Nk}$.}
  \]
Notice that this holds when $r=0$. Let $w=\alpha_1\cdots\alpha_{Nk}\in
Q_{Nk}$. As there are no sinks in~$Q$, there is an arrow $\alpha_{Nk+1}\in
Q_1$ such that $w\alpha_{Nk+1}$ is a path. If we put
$w'=\alpha_2\cdots\alpha_{Nk+1}$, then we have from~\eqref{eq:t:1} 
and the hypothesis~\eqref{eq:t:2} that
  \[
  \alpha_1\cdots\alpha_r\bar\phi(w)\alpha_{Nk-r+1}\cdots\alpha_{Nk+1}
    = \alpha_1\cdots\alpha_{r+1}\bar\phi(w')\alpha_{Nk-r+2}\cdots\alpha_{Nk+1},
  \]
so that in fact
  \[ \label{eq:t:3}
  \bar\phi(w)\alpha_{Nk-r+1} = \alpha_{r+1}\bar\phi(w').
  \]
If $l-2r\geq2$, this implies that all the paths appearing in $\bar\phi(w)$
start with $\alpha_{r+1}$ and, by symmetry, they also end in
$\alpha_{Nk-r}$. In other words, there exists a $\dbar\phi(w)\in\kk
Q_{l-2r-2}$ which is a sum of paths from $s(\alpha_{r+2})$ to
$t(\alpha_{Nk-r-1})$ such that
$\bar\phi(w)=\alpha_{r+1}\dbar\phi(w)\alpha_{Nk-r}$. In this case we have
that the condition~\eqref{eq:t:2} holds with $r$ replaced with~$r+1$, and
we may therefore proceed inductively.

If instead $l-2r=1$, then the equation~\eqref{eq:t:3} tells us there exists
a scalar $\lambda(w)\in\kk$ which is zero if $\alpha_{r+1}\neq \alpha_{Nk-r}$
such that $\bar\phi(w)=\lambda(w)\alpha_{r+1}$,
so in this case we have
  \(
  \phi(w)=\lambda(w)\,\alpha_1\cdots\alpha_r\alpha_{r+1}\alpha_{Nk-r+1}\cdots\alpha_{Nk}
  \).
Finally, if $l-2r=0$, then the equation~\eqref{eq:t:3} implies that there
is a scalar~$\lambda(w)\in\kk$ which is zero if
$\alpha_{r+1}\neq\alpha_{Nk-r+1}$ and such that
$\bar\phi(w)=\lambda(w)e_{s(\alpha_{r+1})}$, and therefore
  \(
  \phi(w)=\lambda(w)\,\alpha_1\cdots\alpha_r\alpha_{Nk-r+1}\cdots\alpha_{Nk}
  \).

In this way we conclude that there is a function $\lambda:Q_{Nk}\to\kk$
such that for each $w=\alpha_1\cdots\alpha_{Nk}\in Q_{Nk}$ we have
  \[
  \phi(w) =
    \begin{cases*}
    \lambda(w)\,\alpha_1\cdots\alpha_r\alpha_{Nk-r+1}\cdots\alpha_{Nk},
        & if $l=2r$ is even; \\
    \lambda(w)\,\alpha_1\cdots\alpha_r\alpha_{r+1}\alpha_{Nk-r+1}\cdots\alpha_{Nk},
        & if $l=2r+1$ is odd;
    \end{cases*}
  \]
with 
  \[ \label{eq:t:5}
  \claim{$\lambda(w)=0$ if $\alpha_{r+1}\neq\alpha_{Nk-r+1}$ and $l=2r$ is
  even, or if $\alpha_{r+1}\neq\alpha_{Nk-r}$ and $l=2r+1$ is odd.}
  \]

We define a relation $\sim$ on the set~$Q_{Nk}$ so that for each
$w$,~$w'\in Q_{Nk}$ we have $w\sim w'$ iff there exist arrows
$\alpha$,~$\beta\in Q_1$ such that $w\alpha=\beta w'$, and let $\approx$ be
the least equivalence relation on~$Q_{Nk}$ coarser that~$\sim$. Now, if
$w$,~$w'\in Q_{Nk}$ are such that $w\sim w'$, there is a path
$\alpha_1\cdots\alpha_{Nk+1}\in Q_{Nk+1}$ such that
$w=\alpha_1\cdots\alpha_{Nk}$ and $w'=\alpha_2\cdots\alpha_{Nk+1}$. From
equation~\eqref{eq:t:1} we have 
  \[
  \lambda(w)\,
  \alpha_1\cdots\alpha_r\alpha_{Nk-r+1}\cdots\alpha_{Nk+1}
  =
  \lambda(w')\,
  \alpha_1\cdots\alpha_{r+1}\alpha_{Nk-r+2}\cdots\alpha_{Nk+1}
  \]
if $l=2r$ is even, and
  \[
  \lambda(w)\,
  \alpha_1\cdots\alpha_r\alpha_{r+1}\alpha_{Nk-r+1}\cdots\alpha_{Nk+1}
  =
  \lambda(w')\,
  \alpha_1\cdots\alpha_{r+1}\alpha_{r+2}\alpha_{Nk-r+2}\cdots\alpha_{Nk+1}
  \]
if $l=2r+1$ is odd. In any of the two cases we find that
$\lambda(w)=\lambda(w')$, and it follows from this that $\lambda$ is constant on the
equivalence classes of the relation~$\approx$.

We claim that in fact 
  \[ \label{eq:t:4}
  \claim{there is only one equivalence class for the relation~$\approx$.}
  \]
There is a preorder~$\preceq$ on the set of vertices~$Q_0$ such that
whenever $i$,~$j\in Q_0$ we have $i\preceq j$ iff there is a path in~$Q$
from~$j$ to~$i$, and associated to~$\preceq$ there is an equivalence
relation~$\asymp$ on~$Q_0$ such that $i\asymp j$ iff $i\preceq j$ and
$j\preceq i$. The equivalence classes of~$\asymp$ are the \newterm{strongly
connected components} of the quiver and the preorder~$\preceq$ induces an
actual order on the quotient $Q_0/\mathord\asymp$; in particular, we may
speak of maximal and minimal strongly connected components. Our
claim~\eqref{eq:t:4} now follows easily from the following two facts:
\begin{itemize}

\item If $w\in Q_{Nk}$, there is a path $w'\in Q_{Nk}$ which is totally
contained in a maximal strongly connected component of~$Q$ and such that
$w\approx w'$, and the same is true replacing `maximal' by `minimal'.

\item If $w$ and $w'$ are elements of~$Q_{Nk}$ totally contained in
possibly different maximal strongly connected components of~$Q$, then
$w\approx w'$.

\end{itemize}
Let us prove the first one, leaving the other for the reader. Let $w\in
Q_{Nk}$. Let $i\in Q_0$ be a vertex in a strongly connected component~$C$
of~$Q_0$ which is maximal among those elements in~$Q_0/\mathord\asymp$
greater than the one containing $s(w)$. As $i$ is not a source, there
exists an arrow $\alpha$ with $t(\alpha)=i$; since the component~$C$ is
maximal, there is a path $u$ from $i$ to $s(\alpha)$ which never leaves~$C$
and, since $\alpha u$ is a closed path starting at $i$, we see that there
exists a path $w'\in Q_{Nk}$ contained in~$C$ and ending in~$i$.
On the other hand, the choice
of~$i$ implies that there exists a path $w_1$ in~$Q$ going from $i$
to~$s(w)$. Considering all the factors of length~$Nk$ of the path $w'w_1w$,
we see at once that $w'\approx w$, as we wanted.

It follows now from~\eqref{eq:t:4} that the function $\lambda:Q_{Nk}\to\kk$
is constant. As $Q$ is not an oriented cycle, there exists a vertex $i\in
Q_0$ and two different arrows $\alpha$ and~$\alpha'$ such that either
$s(\alpha)=s(\alpha')=i$ or $t(\alpha)=t(\alpha')=i$. Suppose, for example,
that we are in the first case; the other can be handled in the same way. 
Since there are no sources and sinks in~$Q$, if $l=2r$ is even, there are
paths $u\in Q_{Nk-r}$, and $v$,~$v'\in Q_{r-1}$, and if $l=2r+1$ is odd,
there are paths $u\in Q_{Nk-r-1}$ and $v$,~$v'\in Q_{r}$ such that, in
either case, $w=u\alpha v$ and $w'=u\alpha'v'$ are paths of length~$Nk$.
In view of our observation~\eqref{eq:t:5}, at least one of the scalars~$\lambda(w)$
and~$\lambda(w')$ is zero. With this we can therefore conclude that
$\phi=0$.
\end{proof}

\paragraph In the presence of sinks and sources, the homogeneous
derivations of positive degree of a truncated algebra may well act
non-trivially, as the following simple example shows. Fix $N\geq3$ and
$k\geq4$, consider the quiver~$Q$
  \[
  \begin{tikzpicture}[
        thick, 
        shorten <=4pt, 
        shorten >=4pt, 
        -{Latex[]}, 
        decoration={snake,amplitude=1pt,segment length=4pt, pre length=1em, post length=1em}
        ]
    \coordinate (1) at (0,0);
    \coordinate (2) at (2.5,0);
    \coordinate (3) at (1.25,1.25);
    \fill (1) circle(2pt);
    \fill (2) circle(2pt);
    \fill (3) circle(2pt);
    \draw (1) -- node[above] {$\alpha$} (2);
    \draw (1) -- node[above left] {$\beta$} (3);
    \draw (3) -- node[above right] {$\gamma$} (2);
    \draw[decorate] (1) parabola bend (1.25,-1.25) (2);
    \node[above] at (1.25,-1.25) {$u$};
  \end{tikzpicture}
  \]
in which $\alpha$,~$\beta$ and~$\gamma$ are arrows and $u$ is a path of
length $\zeta(k)$, and let $A$ be the quotient of the path algebra~$\kk Q$ by the
ideal generated by~$Q_N$. Identifying~$\HH^\bullet(A)$ with the cohomology
of the complex~$\hom_{A^e}(\Br_\bullet,A)$, we see at once that
$\HH^0(A)\cong\kk$, that $\HH^1(A)$ is the vector space spanned by two
linearly independent diagonal derivations and the $E^e$-linear derivation
$\delta:A\to A$, homogeneous of degree~$1$, such that
$\delta(\alpha)=\beta\gamma$ and
$\delta(\omega)=0$ for all arrows~$\omega$ different
from~$\alpha$, that $\HH^{k}(A)$ is $2$-dimensional, spanned by the
$E^e$-linear maps $\phi_1$,~$\phi_2:\kk Q_{\zeta(k)}\to A$ such that
$\phi_1(u)=\alpha$ and $\phi_2(u)=\beta\gamma$, and that all other
cohomology groups are zero. Moreover, computing the Lie action
of~$\HH^1(A)$ of~$\HH^{k}(A)$ using the liftings constructed above for
truncated algebras shows immediately that $[\delta_2,\phi_0] = -\phi_1$. In
particular, the derivation~$\delta$ acts non-trivially
on~$\HH^{k}(A)$.

\subsection*{Crossed products}

\paragraph Let $A$ be an algebra and let $G$ be a finite group acting
on~$A$; we will suppose throughout that our ground field ring is a field
in which the order of~$G$ is invertible and which splits~$G$.
We may construct the cross product algebra $A\rtimes G$ which
as a vector space is~$A\otimes\kk G$, with $\kk G$ the group algebra
of~$G$, and where multiplication is such that $a\otimes g\cdot b\otimes
g=ag(b)\otimes gh$. 

The group acts diagonally on the enveloping algebra~$A^e$, so we can
consider also the crossed product $A^e\rtimes G$. An $A^e\rtimes G$-module
structure on a vector space~$M$ may be described as an $A^e$-module structure 
on which $G$ acts in a compatible way, in the sense that
$g(amb)=g(a)g(m)g(b)$ for all $g\in G$, $a$,~$b\in A$ and $m\in M$. If $M$
is such an $A^e\rtimes G$-module, we denote $M\rtimes G$ the $(A\rtimes
G)^e$-module with underlying vector space $M\otimes\kk G$ and left and
right actions by $A\rtimes G$ given by
  \begin{align}
  &  a\otimes g\cdot m\otimes h = ag(m)\otimes gh,
  && m\otimes h\cdot a\otimes g = mh(a)\otimes hg,
  \end{align}
respectively, whenever $a\otimes g\in A\rtimes G$ and $m\otimes h\in
M\rtimes G$. On the other hand, if $M$ is an $(A\rtimes G)^e$-module, we
denote $M^\ad$ the $A^e\rtimes G$-module which coincides with $M$ as an $A^e$-module 
and on which $G$ acts so that
  \[
  g\cdot m = gmg^{-1}
  \]
for all $g\in G$ and all $m\in M$. These two constructions are related in
the following way:

\begin{Lemma*}
If $M$ is an $A^e\rtimes G$-module and $N$ is an $(A\rtimes
G)^e$-module, there are isomorphisms
  \[
  \begin{tikzcd}
  \hom_{(A\rtimes G)^e}(M\rtimes G,N) \arrow[r, shift left=0.5ex, "\Phi"]
        & \hom_{A^e}(M,N^\ad)^G \arrow[l, shift left=0.5ex, "\Psi"]
  \end{tikzcd}
  \]
natural in~$M$ and~$N$.
\end{Lemma*}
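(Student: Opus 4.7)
The plan is to write down the natural candidates $\Phi$ and $\Psi$ by explicit formulas, to verify each is well-defined, and then to observe by inspection that they are mutually inverse; naturality in both arguments will be immediate from the formulas.

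For $\Phi$, given an $(A\rtimes G)^e$-linear map $f\colon M\rtimes G\to N$, I would set $\Phi(f)(m):=f(m\otimes 1)$. The $A^e$-linearity of $\Phi(f)$ viewed as a map $M\to N^{\ad}$ will follow at once from the identity $(a\otimes 1)(m\otimes 1)(b\otimes 1)=amb\otimes 1$ inside $M\rtimes G$ together with the observation that the underlying $A^e$-structure of $N^{\ad}$ is the original one on~$N$. The $G$-invariance of $\Phi(f)$ is slightly more subtle: my plan is to exploit the two factorizations $m\otimes g=(m\otimes 1)(1\otimes g)=(1\otimes g)(g^{-1}(m)\otimes 1)$ of the same element of $M\rtimes G$, apply~$f$ to both and equate, obtaining $\Phi(f)(m)(1\otimes g)=(1\otimes g)\Phi(f)(g^{-1}(m))$, which is exactly the condition of $G$-invariance when one recalls that the action on $N^{\ad}$ is conjugation by $1\otimes g$.

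In the opposite direction, for $\phi\in\hom_{A^e}(M,N^{\ad})^G$ I would set $\Psi(\phi)(m\otimes g):=\phi(m)(1\otimes g)$. The right $(A\rtimes G)$-linearity of $\Psi(\phi)$ will require only the $A^e$-linearity of $\phi$ and the multiplication rule $(1\otimes g)(a\otimes h)=g(a)\otimes gh$ in $A\rtimes G$; the left $(A\rtimes G)$-linearity is precisely the place where the $G$-invariance assumption enters, via rewriting $\phi(h(m))=(1\otimes h)\phi(m)(1\otimes h^{-1})$. Mutual inversion is then immediate, since $\Psi(\Phi(f))(m\otimes g)=f((m\otimes 1)(1\otimes g))=f(m\otimes g)$ and $\Phi(\Psi(\phi))(m)=\phi(m)(1\otimes 1)=\phi(m)$, and naturality in~$M$ and~$N$ is built into the formulas. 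I do not anticipate any real obstruction; the only point worth watching carefully is that $G$-invariance of an $A^e$-linear map corresponds exactly to the compatibility needed to extend it to an $(A\rtimes G)^e$-linear map on $M\rtimes G$.
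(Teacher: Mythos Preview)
Your proposal is correct and follows exactly the paper's approach: the paper defines $\Phi(f)(m)=f(m\otimes 1)$ and $\Psi(f)(m\otimes g)=f(m)g$, which are your formulas, and leaves all verifications to the reader. You have simply supplied those verifications, and they are accurate.
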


\noindent On the right we are taking invariants with respect to the action
of~$G$ on~$\hom_{A^e}(M,N^\ad)$ given by $(g\cdot f)(m)=g\cdot
f(g^{-1}\cdot m)$ for each $g\in G$ and each $A^e$-linear map $f:M\to
N^\ad$.

\smallskip

\begin{proof}
We may put $\Phi(f)(m)=f(m\otimes 1)$ for all $f\in\hom_{(A\rtimes
G)^e}(M\rtimes G,N)$ and all $m\in M$, and $\Psi(f)(m\otimes g)=f(m)g$ for
all $f\in\hom_{A^e}(M,N^\ad)$ and all $m\otimes g\in M\rtimes G$.
\end{proof}

\smallskip

The following result is an immediate consequence of the lemma, since the
functor~$(\place)^G$ which computes invariants is exact.

\begin{Corollary*}
If $P$ is an
$A^e\rtimes G$-module which is projective as an $A^e$-module, then
$P\rtimes G$ is a projective $(A\rtimes G)^e$-module.~\qed
\end{Corollary*}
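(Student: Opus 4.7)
The plan is to show directly that the functor $\hom_{(A\rtimes G)^e}(P\rtimes G,\place)$ is exact, and then invoke the standard characterization of projectivity. By the natural isomorphism furnished by the preceding lemma, we have
\[
\hom_{(A\rtimes G)^e}(P\rtimes G,\place)
  \;\cong\;
  \bigl(\hom_{A^e}(P,(\place)^\ad)\bigr)^G
\]
as functors from $(A\rtimes G)^e$-modules to $\kk$-modules, so it suffices to see that the right-hand side is exact.

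I would decompose this right-hand side as the composition of three functors and check each one is exact in turn. First, the adjoint action functor $N\mapsto N^\ad$, from $(A\rtimes G)^e$-modules to $A^e\rtimes G$-modules, merely changes the way $G$ acts (the underlying $A^e$-module structure is unchanged), so it is exact. Second, $\hom_{A^e}(P,\place)$ from $A^e\rtimes G$-modules to $\kk G$-modules is exact precisely because $P$ is projective as an $A^e$-module. Third, the invariants functor $(\place)^G$ is exact because $\abs{G}$ is invertible in~$\kk$, so $\kk G$ is semisimple and every short exact sequence of $\kk G$-modules splits.

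The composition of three exact functors is exact, so $\hom_{(A\rtimes G)^e}(P\rtimes G,\place)$ is exact and therefore $P\rtimes G$ is projective as an $(A\rtimes G)^e$-module. There is no real obstacle here; the only point worth being careful about is the naturality of the isomorphism of the lemma in~$N$, which guarantees that the identification is compatible with morphisms in a short exact sequence of $(A\rtimes G)^e$-modules, and the compatibility of the $G$-action used in $(\place)^\ad$ and in the invariants on $\hom_{A^e}(P,\place)$.
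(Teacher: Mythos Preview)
Your proof is correct and follows exactly the same route as the paper: the paper merely states that the corollary is an immediate consequence of the lemma together with the exactness of $(\place)^G$, and you have spelled out in detail the implicit decomposition into $(\place)^\ad$, $\hom_{A^e}(P,\place)$, and $(\place)^G$ and checked each is exact. There is nothing to add.
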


\smallskip

We view $A$ as an $A^e\rtimes G$-module in the obvious way and let
$P_\bullet\to A$ be a resolution of~$A$ as an $A^e\rtimes G$-module by
modules which are projective as $A^e$-modules; such a resolution exists:
for example, as $A^e\rtimes G$ is projective as a left $A^e$-module, it
suffices to take $P_\bullet$ to be an $A^e\rtimes G$-projective resolution
of~$A$, but one can often be much more economical. The corollary implies then
that $P_\bullet\rtimes G\to A\rtimes G$ is a projective resolution of
$A\rtimes G$ as an $(A\rtimes G)^e$-module. In particular the cohomology of
the complex $\hom_{(A\rtimes G)^e}(P_\bullet\rtimes G,A\rtimes G)$ can be
identified to the Hochschild cohomology $\HH^\bullet(A\rtimes G)$ of
$A\rtimes G$. As this complex is, according to the lemma,  naturally isomorphic to
$\hom_{A^e}(P_\bullet,A\rtimes G)^G$ and since $G$ acts semisimply, taking
homology in this second complex we see 
that $\HH^\bullet(A\rtimes G)$ is isomorphic to $H^\bullet(A,A\rtimes
G)^G$; this result is usually obtained using the spectral sequence
constructed by Drago\c{s} \c{S}tefan in~\cite{Stefan}, but for our purposes
we need the isomorphism to come out of an actual resolution.

Let $\delta:A\rtimes G\to A\rtimes G$ be a derivation of~$A\rtimes G$. The
restriction $\delta|_{\kk G}:\kk G\to A\rtimes G$  is a derivation of the
group algebra~$\kk G$ with values in the $\kk G$-bimodule $A\rtimes G$ and
therefore, since $\kk G$ is a separable algebra, this restriction is inner:
there exists an element $u\in A\rtimes G$ such that $\delta(g)=[u,g]$ for
all $g\in G$. It follows that the derivation $\delta-[u,\place]:A\rtimes
G\to A\rtimes G$, which is cohomologous to~$\delta$, is
\newterm{normalized}, that is, it vanishes on~$G$ and we conclude that,
up to inner derivations, we can assume that derivations
of~$A\rtimes G$ are normalized. 

\paragraph We specialize now the discussion to the following situation.
Let $G$ be a finite group, let $\rho:G\to\GL(V)$ be a representation
of~$G$ on a finite dimensional vector space~$V$, and consider the corresponding action of~$G$ on the symmetric
algebra $S(V)$ and the associated crossed product $S(V)\rtimes G$. We then
have available the bimodule Koszul resolution
$K_\bullet=S(V)\otimes\Lambda^\bullet V\otimes S(V)\to S(V)$ of~$S(V)$ as
an $S(V)^e$-module, and it turns out, when we endow $K_\bullet$ with
its natural action of~$G$, that the resolution is a complex of
$S(V)^e\rtimes G$-modules.

As explained above, the complex
$\hom_{(S(V)\rtimes G)^e}(K_\bullet\rtimes G,S(V)\rtimes G)$, which
computes the Hochschild cohomology~$\HH^\bullet(S(V)\rtimes G)$,
is isomorphic to
$\hom_{S(V)^e}(K_\bullet,S(V)\rtimes G)^G$
which, in turn, can be identified with the complex
$\hom(\Lambda^\bullet(V),S(V)\rtimes G)^G$. There is, moreover, a
decomposition
  \[ \label{eq:x:1}
  \hom(\Lambda^\bullet(V),S(V)\rtimes G) 
        = \bigoplus_{g\in G}\hom(\Lambda^\bullet(V),S(V)\rtimes g)
  \]
If $g\in G$, we let $V^g$ be the fixed subspace of~$g$ in~$V$, $V_g$ the
subspace of~$V$ spanned by eigenvectors of~$g$ corresponding to eigenvalues
different from~$1$, and $d(g)=\dim V_g$. As $V=V^g\oplus V_g$ and this
decomposition is preserved by~$g$, we have $S(V)=S(V^g)\otimes S(V_g)$, 
$\Lambda^\bullet(V)=\Lambda^\bullet(V^g)\otimes\Lambda^\bullet(V_g)$, and
there is an obvious map of complexes, 
  \[
  \vee:
  \hom(\Lambda^\bullet(V^g),S(V^g))
    \otimes\hom(\Lambda^\bullet(V_g),S(V_g)\rtimes g)
    \to \hom(\Lambda^\bullet(V),S(V)\rtimes g),
  \]
which can be seen to be a quasi-isomorphism; this is part of the content of
Theorem~\textsc{XI}.3.1 in \cite{CE}, for example. The complex
$\hom(\Lambda^\bullet(V^g),S(V^g))$ has zero differential; on the other
hand, the cohomology of the complex
$\hom(\Lambda^\bullet(V_g),S(V_g)\rtimes g)$ is zero except in
degree~$d(g)$, where it is one dimensional and spanned by the
cohomology class of any non-zero linear map
$\omega_g:\Lambda^{d(g)}(V_g)\to \kk g$, as shown in Lemma~3.4 of~\cite{F}; we fix one such map once and for
all. All this means that the inclusion
of complexes
  \[
  \hom(\Lambda^\bullet(V^g),S(V^g))\vee\omega_g
        \hookrightarrow\hom(\Lambda^\bullet(V),S(V)\rtimes g)
  \]
is a quasi-isomorphism, with the subcomplex having zero differential. Since
everything in sight is $G$-equivariant, and taking into account the
decomposition~\eqref{eq:x:1}, the same is true of the inclusion
  \[ \label{eq:t:x}
  \left(\bigoplus_{g\in G}\hom(\Lambda^\bullet(V^g),S(V^g))\vee\omega_g\right)^G
        \hookrightarrow
  \hom(\Lambda^\bullet(V),S(V)\rtimes G)^G.
  \]
If $g$,~$h$ are in~$G$, then $h\cdot\omega_g$ and $\omega_{hgh^{-1}}$ are two
non-zero elements of the $1$-dimensional vector space
$\hom(\Lambda^{d(hgh^{-1})}(V_{hgh^{-1}}),\kk hgh^{-1})$, so  there exists a scalar
$\lambda(h, g)\in\kk^\times$ such that
$h\cdot\omega=\lambda(h,g)\omega_{hgh^{-1}}$. In this way we find a function
$\lambda:G\times G\to\kk^\times$ and the associativity of the action of~$G$
implies that $\lambda(gh,k)=\lambda(h,k)\lambda(g,hkh^{-1})$ for 
all~$g$,~$h$,~$k\in G$. 

If $g\in G$, we let $G_g$ be the centralizer of~$g$ in~$G$. The map
$\chi_g:h\in G_g\mapsto\lambda(h,g)\in\CC^\times$ is a group morphism. The
group $G$ acts on $\hom(\Lambda^\bullet(V^g),S(V^g))$, so we may restrict
that action to~$G_c$ and consider the subspace of \newterm{semi-invariants}
$\hom(\Lambda^\bullet(V^g),S(V^g))^{G_g}_{\chi_g}$, that is, of the elements
$f\in\hom(\Lambda^\bullet(V^g),S(V^g))$ such that $h\cdot f=\chi_g(h)f$ for
all $h\in C_g$. Let now $\lin{G}$ be the set of conjugacy classes of~$G$
and for each $c\in\lin{G}$ let $g_c$ be a fixed element of~$c$. Then we have an
isomorphism $\Phi_c$
  \[
  \Phi_c:
  \hom(\Lambda^\bullet(V^{g_c}),S(V^{g_c}))^{G_{g_c}}_{\chi_{g_c}}[-d(g)]
    \to
      \left(\bigoplus_{g\in c}\hom(\Lambda^\bullet(V^g),S(V^g))\vee\omega_g\right)^G
  \]
such that for each
$f\in\hom(\Lambda^\bullet(V^{g_c}),S(V^{g_c}))^{G_{g_c}}_{\chi_{g_c}}$ we
have 
  \[
  \Phi_c(f) = \sum_{g\in G} \lambda(g,g_c)g(f)\vee\omega_{gg_cg^{-1}}.
  \]
It follows now that we have an isomorphism
  \[
  \bigoplus_{c\in\lin{G}}\Phi_c:
    \bigoplus_{c\in\lin{G}}\hom(\Lambda^\bullet(V^{g_c}),S(V^{g_c}))^{G_{g_c}}_{\chi_{g_c}}[-d(g)]
    \to
    \left(\bigoplus_{g\in G}\hom(\Lambda^\bullet(V^g),S(V^g))\vee\omega_g\right)^G,
  \]
and in this way we arrive at a well-known and very explicit description of
the Hochschild cohomology of the crossed product algebra
$S(V)\rtimes G$,
  \[
  \HH^\bullet(S(V)\rtimes G) =
         \bigoplus_{c\in\lin{G}}
                \hom(\Lambda^\bullet(V^{g_c}),S(V^{g_c}))^{G_{g_c}}_{\chi_{g_c}}[-d(g_c)]
  \]
originally obtained by Marco Farinati in~\cite{F} and Victor Ginzburg
and Dmitry Kaledin in~\cite{GK}; the paper \cite{SW} is a good reference
for this, too. In particular, if we let $\lin{G}_1$ be the
set of conjugacy classes $c\in\lin{G}$ such that $d(g_c)=1$, we have 
  \[ \label{eq:x:2}
  \HH^1(S(V)\rtimes G) = \hom(V,S(V))^G \oplus 
        \bigoplus_{c\in\lin{G}_1}S(V^{g_c})^{G_{g_c}}_{\chi_{g_c}}
  \]
and something nice happens: if $c\in\lin{G}_1$, then
$\chi_{g_c}(g_c)=\det\rho(g_c)\neq1$ and therefore
$S(V^{g_c})^{G_{g_c}}_{\chi_{g_c}}=0$, since $g_c\in G_{g_c}$. We thus see that, in fact, 
  \[
  \HH^1(S(V)\rtimes G) = \hom(V,S(V))^G.
  \]
Tracing back the isomorphisms involved, we can describe this
identification explicitly as follows.
If $r:V\to S(V)$ is a $G$-equivariant linear map, then one of the
universal properties of the symmetric algebra $S(V)$ implies that there is
a unique derivation $\bar r:S(V)\to S(V)$ which extends~$r$ and it turns
out to be $G$-equivariant. There is then a normalized derivation
$\delta_r:S(V)\rtimes G\to S(V)\rtimes G$ such that $\delta_r(fg)=
\bar r(f)g$ for all $f\in S(V)$ and $g\in G$. The class of this~$\delta_r$
is the element of~$\HH^1(S(V)\rtimes G)$ corresponding to the map~$r$.

\paragraph We are finally in position to describe the Lie module structure
of~$\HH^\bullet(S(V)\rtimes G)$ over the Lie algebra~$\HH^1(S(V)\rtimes
G)$ using our results from Section~\ref{sect:hh}.
We fix a $G$-equivariant map $r:V\to S(V)$ and let
$\delta=\delta_r:S(V)\rtimes G\to S(V)\rtimes G$ be the corresponding derivation 
described above. Let $T(V)$ be the tensor algebra on~$V$ and denote $\pi:T(V)\to
S(V)$ the natural surjection. Since $\pi$ is $G$-equivariant, it admits a
$G$-equivariant section $\sigma:S(V)\to T(V)$. On the other hand, and using
now a universal property of the tensor algebra, there
exists a unique linear derivation $D:T(V)\to S(V)\otimes V\otimes S(V)$ of
the algebra~$T(V)$ with values in $S(V)\otimes V\otimes S(V)$ endowed with
it obvious $T(V)$-bimodule structure such that $D(v)=1\otimes v\otimes 1$
for all $v\in V$, and it is $G$-equivariant. If $v\in V$, we will write
the element $D(\sigma(r(v)))$ of~$S(V)\otimes V\otimes S(V)$ in the form
$v\p1\otimes v\p2\otimes v\p3$ with an implicit sum, \`a~la~Sweedler. There
is a $\delta^e$-lifting $\delta_\bullet:K_\bullet\rtimes G\to
K_\bullet\rtimes G$ of~$\delta$ to the resolution $K_\bullet\rtimes G$ of
$S(V)\rtimes G$ such that
  \[
  \delta_p(1\otimes v_1\wedge\cdots\wedge v_p\otimes 1\rtimes 1)
    = \sum_{i=1}^p 
                v\p[i]1\otimes v_1\wedge\cdots\wedge
                 v\p[i]2\wedge \cdots\wedge v_p\otimes v\p[i]3\rtimes 1,
  \]
as one can check by direct computation. From this lifting we can construct
the map of complexes
  \[
  \delta_\bullet^\sharp:
        \hom_{(S(V)\rtimes G)^e}(K_\bullet\rtimes G,S(V)\rtimes G)
        \to 
        \hom_{(S(V)\rtimes G)^e}(K_\bullet\rtimes G,S(V)\rtimes G)
  \]
which up to natural isomorphisms in the lemma is identified with the map
  \[
  \delta_\bullet^\sharp:
        \hom(\Lambda^\bullet(V),S(V)\rtimes G)^G
        \to
        \hom(\Lambda^\bullet(V),S(V)\rtimes G)^G
  \]
given in each degree~$p\geq0$ by
  \[
  (\delta_p^\sharp\phi)(v_1\wedge\cdots\wedge v_p)
        = 
          \delta(\phi(v_1\wedge\cdots\wedge v_p))
          -
          \sum_{i=1}^p
                v\p[i]1
                \phi(v_1\wedge\cdots\wedge v\p[i]2\wedge\cdots\wedge v_p)
                v\p[i]3
  \]
for all $\phi\in\hom(\Lambda^p(V),S(V)\rtimes G)^G$. The right hand side in
this equation therefore a representative for the Gerstenhaber bracket
$[\delta,\phi]$.

%%%%%%%%%%%%%%%%%%%%%%%%%%%%%%%%%%%%%%%%%%%%%%%%%%%%%%%%%%%%%%%%%%%%%%%%%%%%
\section{Tensor products, \texorpdfstring{$\Tor$}{Tor} and Hochschild homology}
\label{sect:tor}

\paragraph Let $A$ be an algebra and $\delta:A\to A$ a derivation, as
before.
Let $M$ and $N$ be a right and a left $A$-module, respectively, and let
$f:M\to M$ and $g:N\to N$ be $\delta$-operators on~$M$ and~$N$. There is a
linear map $f\boxtimes g:M\otimes_AN\to M\otimes_AN$ such that $(f\boxtimes
g)(m\otimes
n)=f(m)\otimes n+m\otimes g(n)$ for all $m\in M$ and all $n\in N$, and this
map depends naturally on the data used to construct it in the obvious sense.

If $\eta:Q_\bullet\to N$ is a projective resolution of~$N$ as a
left $A$-module and $g_\bullet:Q_\bullet\to Q_\bullet$ is a
$\delta$-lifting of~$g$ to~$Q_\bullet$, then we may consider the complex
$M\otimes_AQ_\bullet$ and the morphism $f\boxtimes
g_\bullet:M\otimes_AQ_\bullet\to M\otimes_AQ_\bullet$ which in each
homological degree~$i\geq0$ is given by~$f\boxtimes g_i$ and 
which induces upon passing to homology a map 
  \[
  H(f\boxtimes g_\bullet):
        H(M\otimes_AQ_\bullet)\to H(M\otimes_AQ_\bullet).
  \]There is an
analogue of Lemma~\pref{lemma:comm} and therefore proceeding as we did to
prove Theorem~\pref{thm:nabla} we obtain:

\begin{Theorem}\label{thm:tor}
If $M$ and $N$ are a right and a left $A$-module, respectively, and $f:M\to
M$ and $g:N\to N$ are $\delta$-operators, then there is a canonical morphism of
graded vector spaces
  \[
  \nabla^{f,g}_\bullet:\Tor^A_\bullet(M,N)\to\Tor^A_\bullet(M,N)
  \]
such that for each projective resolution $\eta:Q_\bullet\to N$ and each
$\delta$-lifting $g_\bullet:Q_\bullet\to Q_\bullet$ of~$g$ to~$Q_\bullet$
the diagram
  \[
  \begin{tikzcd}[column sep=1.5cm]
  H(M\otimes_AQ_\bullet) 
        \arrow[r, "\nabla^{f,g,Q_\bullet}_\bullet"] 
        \arrow[d, swap, "\cong"]
    & H(M\otimes_AQ_\bullet) 
        \arrow[d, "\cong"]
    \\
  \Tor^A_\bullet(M,N) 
        \arrow[r, "\nabla^{f,g}_\bullet"]
    & \Tor^A_\bullet(M,N)
  \end{tikzcd}
  \]
commutes.~\qed
\end{Theorem}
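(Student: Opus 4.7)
The plan is to mirror the construction and verifications used for Theorem~\pref{thm:nabla}, transposing them from the $\hom$ side to the tensor side. First, I would verify that for any $\delta$-lifting $g_\bullet$ of~$g$ the map $f\boxtimes g_i$ descends from $M\otimes Q_i$ to $M\otimes_AQ_i$: the two contributions coming from $\delta(a)$, one from $f$ being a $\delta$-operator on the right $A$-module~$M$ and one from $g_i$ being a $\delta$-operator on the left $A$-module~$Q_i$, cancel against each other. A direct computation using the fact that $g_\bullet$ is a chain map lifting~$g$ then shows that $f\boxtimes g_\bullet$ commutes with the differential $1_M\otimes d_\bullet$, so we obtain a morphism of complexes $f\boxtimes g_\bullet:M\otimes_AQ_\bullet\to M\otimes_AQ_\bullet$.

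Next, I would establish independence from the choice of $\delta$-lifting. If $g_\bullet$ and $g'_\bullet$ are two such liftings, the difference of the associated maps equals $1_M\otimes(g_\bullet-g'_\bullet)$; since $g_\bullet-g'_\bullet$ is $A$-linear (as the difference of two $\delta$-operators) and lifts the zero map $N\to N$, it is $A$-linearly homotopic to zero, and tensoring that homotopy with~$M$ on the left gives a homotopy between $f\boxtimes g_\bullet$ and $f\boxtimes g'_\bullet$. The induced map on $H(M\otimes_AQ_\bullet)$ therefore depends only on~$f$ and~$g$, and may be denoted $\nabla^{f,g,Q_\bullet}_\bullet$.

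Then I would formulate and prove the analogue of Lemma~\pref{lemma:comm}: given two projective resolutions $\eta:Q_\bullet\to N$ and $\eta':Q'_\bullet\to N$ with $\delta$-liftings $g_\bullet$ and $g'_\bullet$ of~$g$, and a morphism of complexes $\alpha_\bullet:Q'_\bullet\to Q_\bullet$ of $A$-modules lifting $\id_N$, the square with vertical maps $1_M\otimes\alpha_\bullet$ and horizontal maps $f\boxtimes g'_\bullet$ and $f\boxtimes g_\bullet$ commutes up to homotopy. The difference of the two composites equals $1_M\otimes h_\bullet$, where $h_\bullet=\alpha_\bullet g'_\bullet-g_\bullet\alpha_\bullet:Q'_\bullet\to Q_\bullet$. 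The main technical step---and really the only place where a computation is unavoidable---is to check that $h_\bullet$ is $A$-linear: the $\delta(a)$ contributions produced by $g'_i$ (then passed through the $A$-linear $\alpha_i$) and by $g_i$ cancel, exactly as in the proof of Lemma~\pref{lemma:comm}. Being an $A$-linear chain map that lifts the zero endomorphism of~$N$, $h_\bullet$ is $A$-linearly homotopic to zero, and tensoring with~$M$ produces the required homotopy.

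With these pieces in place the standard argument yields compatibility of the maps $\nabla^{f,g,Q_\bullet}_\bullet$ with the canonical isomorphisms $H(M\otimes_AQ_\bullet)\cong\Tor^A_\bullet(M,N)$ for varying projective resolutions, and hence a well-defined morphism $\nabla^{f,g}_\bullet:\Tor^A_\bullet(M,N)\to\Tor^A_\bullet(M,N)$ that makes the diagram in the statement commute.
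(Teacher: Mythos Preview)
Your proposal is correct and follows exactly the route the paper indicates: the paper simply remarks that there is an analogue of Lemma~\pref{lemma:comm} and that one then proceeds as in the proof of Theorem~\pref{thm:nabla}, marking the theorem with a \qed. You have faithfully carried out that program, supplying the details the paper leaves implicit.
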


If in the situation of the theorem we also have a projective resolution
$\epsilon:P_\bullet\to M$ of $M$ and a $\delta$-lifting
$f_\bullet:P_\bullet\to P_\bullet$ of~$f$ to~$P_\bullet$, we may
consider the (total complex of the) tensor product
$P_\bullet\otimes_AQ_\bullet$ and on it the linear map $f_\bullet\boxtimes
g_\bullet:P_\bullet\otimes_AQ_\bullet\to P_\bullet\otimes_AQ_\bullet$
constructed in the obvious way. As the diagram
  \[
  \begin{tikzcd}[column sep=1.5cm]
  P_\bullet\otimes_AQ_\bullet
        \arrow[r, "f_\bullet\boxtimes g_\bullet"]
        \arrow[d, swap, "\epsilon\otimes\id_{Q_{\bullet}}"]
    & P_\bullet\otimes_AQ_\bullet
        \arrow[d, "\epsilon\otimes\id_{Q_{\bullet}}"]
    \\
  M\otimes_AQ_\bullet
        \arrow[r, "f\boxtimes g_\bullet"]
    & M\otimes_AQ_\bullet
  \end{tikzcd}
  \]
commutes, taking homology and observing that the morphism
$\epsilon\otimes\id_{Q_\bullet}$ induces the canonical isomorphism
$H(P_\bullet\otimes_AQ_\bullet)\cong H(M\otimes_AQ_\bullet)$, we find that
the diagram
  \[
  \begin{tikzcd}[column sep=1.5cm]
  H(P_\bullet\otimes_AQ_\bullet) 
        \arrow[r, "H(f_\bullet\boxtimes g_\bullet)"] 
        \arrow[d, swap, "\cong"]
    & H(P_\bullet\otimes_AQ_\bullet) 
        \arrow[d, "\cong"]
    \\
  \Tor^A_\bullet(M,N) 
        \arrow[r, "\nabla^{f,g}_\bullet"]
    & \Tor^A_\bullet(M,N)
  \end{tikzcd}
  \]
with vertical arrows the canonical isomorphisms, 
commutes and, proceeding symmetrically, that the same happens with
  \[
  \begin{tikzcd}[column sep=1.5cm]
  H(P_\bullet\otimes_AN) 
        \arrow[r, "H(f_\bullet\boxtimes g)"] 
        \arrow[d, swap, "\cong"]
    & H(P_\bullet\otimes_AN) 
        \arrow[d, "\cong"]
    \\
  \Tor^A_\bullet(M,N) 
        \arrow[r, "\nabla^{f,g}_\bullet"]
    & \Tor^A_\bullet(M,N)
  \end{tikzcd}
  \]
This means that the computation of the map $\nabla^{f,g}_\bullet$ is as
``balanced'' as that of $\Tor$ itself.

\paragraph If $\delta^e=\delta\otimes1+1\otimes\delta:A^e\to A^e$ is the
derivation of the enveloping algebra induced by~$\delta$ and if we view
$\delta:A\to A$ as a $\delta^e$-operator both on the left $A^e$-module~$A$, as
in~\pref{p:env}, and on the right $A^e$-module~$A$, and recalling
that in our situation we can identify the Hochschild homology
$\HH_\bullet(A)$ with $\Tor^{A^e}_\bullet(A,A)$, the construction
of Theorem~\pref{thm:tor} gives us a map
  \[
  \nabla_\bullet^{\delta,\delta}:\HH_\bullet(A)\to\HH_\bullet(A)
  \]
which we can compute in terms of any projective resolution of~$A$ as
an~$A^e$-bimodule, provided we can $\delta^e$-lift $\delta$ to it. If we
write what this amounts to when we use the $A^e$-projective resolution
$\epsilon:B(A)_\bullet\to A$ and the $\delta^e$-lifting described
in~\pref{p:env}, we find immediately that the map
$\nabla^{\delta,\delta}_\bullet$ induced on Hochschild homology by~$\delta$
coincides with the one considered by Tom Goodwillie in~\cite{Goodwillie}
or Jean-Louis Loday in Section~4.1 of~\cite{Loday}.

%%%%%%%%%%%%%%%%%%%%%%%%%%%%%%%%%%%%%%%%%%%%%%%%%%%%%%%%%%%%%%%%%%%%%%%%%%%%
\begin{bibdiv}
\begin{biblist}

\bib{B}{article}{
   author={Bardzell, Michael J.},
   title={The alternating syzygy behavior of monomial algebras},
   journal={J. Algebra},
   volume={188},
   date={1997},
   number={1},
   pages={69--89},
   issn={0021-8693},
   review={\MR{1432347}},
   %doi={10.1006/jabr.1996.6813},
}

\bib{CE}{book}{
   author={Cartan, Henri},
   author={Eilenberg, Samuel},
   title={Homological algebra},
   publisher={Princeton University Press, Princeton, N. J.},
   date={1956},
   pages={xv+390},
   review={\MR{0077480}},
}

\bib{Cibils:truncated}{article}{
   author={Cibils, Claude},
   title={Rigidity of truncated quiver algebras},
   journal={Adv. Math.},
   volume={79},
   date={1990},
   number={1},
   pages={18--42},
   issn={0001-8708},
   review={\MR{1031825}},
   %doi={10.1016/0001-8708(90)90057-T},
}

\bib{F}{article}{
   author={Farinati, Marco},
   title={Hochschild duality, localization, and smash products},
   journal={J. Algebra},
   volume={284},
   date={2005},
   number={1},
   pages={415--434},
   issn={0021-8693},
   review={\MR{2115022}},
   %doi={10.1016/j.jalgebra.2004.09.009},
}	

\bib{G}{article}{
   author={Gerstenhaber, Murray},
   title={The cohomology structure of an associative ring},
   journal={Ann. of Math. (2)},
   volume={78},
   date={1963},
   pages={267--288},
   issn={0003-486X},
   review={\MR{0161898}},
}

\bib{G:Deformations}{article}{
   author={Gerstenhaber, Murray},
   title={On the deformation of rings and algebras},
   journal={Ann. of Math. (2)},
   volume={79},
   date={1964},
   pages={59--103},
   issn={0003-486X},
   review={\MR{0171807}},
}

\bib{GS}{article}{
   author={Gerstenhaber, Murray},
   author={Schack, Samuel D.},
   title={Algebraic cohomology and deformation theory},
   conference={
      title={Deformation theory of algebras and structures and applications},
      address={Il Ciocco},
      date={1986},
   },
   book={
      series={NATO Adv. Sci. Inst. Ser. C Math. Phys. Sci.},
      volume={247},
      publisher={Kluwer Acad. Publ., Dordrecht},
   },
   date={1988},
   pages={11--264},
   review={\MR{981619}},
   %doi={10.1007/978-94-009-3057-5-2},
}

\bib{GK}{article}{
   author={Ginzburg, Victor},
   author={Kaledin, Dmitry},
   title={Poisson deformations of symplectic quotient singularities},
   journal={Adv. Math.},
   volume={186},
   date={2004},
   number={1},
   pages={1--57},
   issn={0001-8708},
   review={\MR{2065506}},
   %doi={10.1016/j.aim.2003.07.006},
}
	
\bib{Goodwillie}{article}{
   author={Goodwillie, Thomas G.},
   title={Cyclic homology, derivations, and the free loopspace},
   journal={Topology},
   volume={24},
   date={1985},
   number={2},
   pages={187--215},
   issn={0040-9383},
   review={\MR{793184}},
   %doi={10.1016/0040-9383(85)90055-2},
}

\bib{Locateli}{article}{
   author={Locateli, A. C.},
   title={Hochschild cohomology of truncated quiver algebras},
   journal={Comm. Algebra},
   volume={27},
   date={1999},
   number={2},
   pages={645--664},
   issn={0092-7872},
   review={\MR{1671958}},
   %doi={10.1080/00927879908826454},
}

\bib{Loday}{book}{
   author={Loday, Jean-Louis},
   title={Cyclic homology},
   series={Grundlehren der Mathematischen Wissenschaften [Fundamental
   Principles of Mathematical Sciences]},
   volume={301},
   edition={2},
   date={1998},
   pages={xx+513},
   isbn={3-540-63074-0},
   review={\MR{1600246}},
   %doi={10.1007/978-3-662-11389-9},
}

\bib{Negron}{thesis}{
   author={Negron, Cris},
   title={Alternate Approaches to the Cup Product and Gerstenhaber Bracket
   on Hochschild Cohomology},
   date={2015},
   pages={180},
   isbn={978-1339-09006-1},
   type={Ph.\,D. thesis},
   organization={University of Washington},
   address={Seattle, WA, United States},
   review={\MR{3438933}},
}

\bib{NS}{article}{
   author={Negron, Cris},
   author={Witherspoon, Sarah},
   title={An alternate approach to the Lie bracket on Hochschild cohomology},
   date={2015},
   eprint={http://arxiv.org/abs/1406.0036v3},
 }

\bib{Retakh}{article}{
   author={Retakh, V. S.},
   title={Homotopy properties of categories of extensions},
   language={Russian},
   journal={Uspekhi Mat. Nauk},
   volume={41},
   date={1986},
   number={6(252)},
   pages={179--180},
   issn={0042-1316},
   review={\MR{890505}},
}	

\bib{R}{thesis}{
   author={Rom\'an, Lucrecia},
   title={La cohomolog\'ia de Hochschild de \'algebras de cuerdas y su
   estructura de \'algebra de Gerstenhaber},
   date={2016},
   type={Ph.\,D. thesis},
   organization={Universidad Nacional del Sur},
   address={Bah\'ia Blanca, Argentina},
}

\bib{SW}{article}{
   author={Shepler, Anne V.},
   author={Witherspoon, Sarah},
   title={Hochschild cohomology and graded Hecke algebras},
   journal={Trans. Amer. Math. Soc.},
   volume={360},
   date={2008},
   number={8},
   pages={3975--4005},
   issn={0002-9947},
   review={\MR{2395161}},
   %doi={10.1090/S0002-9947-08-04396-1},
}

\bib{Stefan}{article}{
   author={{\c{S}}tefan, Drago{\c{s}}},
   title={Hochschild cohomology on Hopf Galois extensions},
   journal={J. Pure Appl. Algebra},
   volume={103},
   date={1995},
   number={2},
   pages={221--233},
   issn={0022-4049},
   review={\MR{1358765}},
   %doi={10.1016/0022-4049(95)00101-2},
}

\bib{Schwede}{article}{
   author={Schwede, Stefan},
   title={An exact sequence interpretation of the Lie bracket in Hochschild
   cohomology},
   journal={J. Reine Angew. Math.},
   volume={498},
   date={1998},
   pages={153--172},
   issn={0075-4102},
   review={\MR{1629858}},
   %doi={10.1515/crll.1998.048},
}

\bib{Sk}{article}{
   author={Sk{\"o}ldberg, Emil},
   title={A contracting homotopy for Bardzell's resolution},
   journal={Math. Proc. R. Ir. Acad.},
   volume={108},
   date={2008},
   number={2},
   pages={111--117},
   issn={1393-7197},
   review={\MR{2475805}},
   %doi={10.3318/PRIA.2008.108.2.111},
}

\bib{XHJ}{article}{
   author={Xu, Yun-ge},
   author={Han, Yang},
   author={Jiang, Wen-feng},
   title={Hochschild cohomology of truncated quiver algebras},
   journal={Sci. China Ser. A},
   volume={50},
   date={2007},
   number={5},
   pages={727--736},
   issn={1006-9283},
   review={\MR{2355359}},
   %doi={10.1007/s11425-007-2085-x},
}

\end{biblist}
\end{bibdiv}

\end{document}